\numberwithin{equation}{section}
\newcommand{\D}{\mathbb{D}\,}
\DeclareMathOperator{\grad}{grad}
\newtheorem{theorem}{Theorem}[section]
\newtheorem{lemma}[theorem]{Lemma}
\newtheorem{proposition}[theorem]{Proposition}
\theoremstyle{definition}
\theoremstyle{remark}
\newtheorem{remark}[theorem]{Remark}
\numberwithin{equation}{section}
\renewcommand{\Re}{{\rm Re}\,}
\renewcommand{\Im}{{\rm Im}\,}
\begin{document}

\title[Boundary uniqueness and spectral subspaces]
{Boundary uniqueness of harmonic functions and
spectral subspaces of operator groups}
\author{Alexander Borichev}
\address{Centre de Math\'ematiques et
Informatique, Universit\'e d'Aix-Mar\-seille I, 39 rue Fr\'ed\'eric
Joliot-Curie, 13453 Marseille, France}
 \email{borichev@cmi.univ-mrs.fr}
\author{Yuri Tomilov}
\address{Department of Mathematics and Computer Science, Nicolas Copernicus University, ul. Chopina 12/18, 87-100 Torun,
Poland and Institute of Mathematics, Polish Academy of Sciences,
\' Sniadeckich str. 8, 00-956 Warsaw, Poland}
\email{tomilov@mat.uni.torun.pl}
\thanks{This was completed with the support of the Marie Curie Transfer of Knowledge program, project "TODEQ".
The first author was also partially supported the ANR projects
DYNOP and FRAB. The second author was partially supported the MNiSzW grant
 Nr. N201384834.}

\begin{abstract}
We obtain new uniqueness theorems for harmonic fun\-ctions
defined on the unit disc or in the half plane. These results are applied
to obtain new resolvent descriptions of spectral subspaces of
polynomially bounded groups of operators on Banach spaces.
\end{abstract}

\subjclass{ Primary 31A20; Secondary 47D06.}
\keywords{uniqueness theorems, harmonic
functions, approach regions, spectral subspaces, operator groups.}

\maketitle

\section{Introduction}

It is a simple fact that a function $u$ harmonic on the unit disc
and having zero limits at the boundary is equal to $0$. If however
we replace (unrestricted) limits at the boundary by restricted
limits (say, radial ones or non tangential ones), then the
statement becomes false, see e.g. Proposition \ref{proposition}
below. One way to get the uniqueness in this case is to
impose certain growth conditions on $u$.

To get an intuition on what kind of restrictions on $u$ can be
imposed we recall two known uniqueness theorems for harmonic
functions in the unit disc. Given a function $u$ continuous on the
unit disc $\D$, we set
$$
M_r(u)=\max_{0\le\theta\le 2\pi}u(re^{i\theta}),\qquad 0<r<1.
$$
\begin{theorem} {\rm(V.~L.~Shapiro, \cite{SHA})} If $u$ is harmonic in $\D$
and
\begin{gather*}
M_r(|u|)=o((1-r)^{-2}),\qquad r\to 1-,\\
\lim_{z=re^{i\varphi}\to e^{i\varphi}}u(z)=0,\qquad  \varphi\in [0,2\pi],
\end{gather*}
then $u=0$.
\label{t1}
\end{theorem}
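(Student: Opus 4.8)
The plan is to deduce Theorem~\ref{t1} from the classical uniqueness theory for trigonometric series, by viewing $u$ restricted to the circle $|z|=r$ as the Abel mean of its ``boundary'' Fourier series. Writing the harmonic function $u$ as $h_1+\overline{h_2}$ with $h_1,h_2$ holomorphic in $\D$ and expanding in powers of $z$, one gets
\[
u(re^{i\theta})=\sum_{n\in\Z}c_n\,r^{|n|}e^{in\theta},\qquad 0<r<1,
\]
so that $r\mapsto u(re^{i\theta})$ is exactly the Abel mean, at parameter $r$, of the trigonometric series $S:=\sum_{n\in\Z}c_ne^{in\theta}$. From $|c_n|\,r^{|n|}=\bigl|\tfrac{1}{2\pi}\int_0^{2\pi}u(re^{i\theta})e^{-in\theta}\,d\theta\bigr|\le M_r(|u|)$, taking $r=1-1/|n|$ and using the first hypothesis yields $c_n=o(|n|^2)$ as $|n|\to\infty$, while the second hypothesis says precisely that $S$ is Abel-summable to $0$ at every $\theta\in[0,2\pi]$. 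Thus it suffices to prove: a trigonometric series with coefficients $c_n=o(|n|^2)$ which is Abel-summable to $0$ everywhere must be the zero series; granting this, all $c_n=0$ and $u\equiv0$.

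For this last statement I would use Riemann's method of formal integration. Fix an integer $k$ with $\sum_{n\ne0}|c_n|/|n|^k<\infty$ (since $c_n=O(|n|^2)$, the choice $k=4$ is comfortably enough) and introduce Riemann's function
\[
\Phi(\theta)=\frac{c_0\,\theta^k}{k!}+\sum_{n\ne0}\frac{c_n}{(in)^k}\,e^{in\theta},
\]
which is continuous on $\RR$ and for which $\Phi(\theta)-c_0\theta^k/k!$ is $2\pi$-periodic. Two classical ingredients then finish the proof. First, an Abelian theorem (proved by summation by parts and using the growth $c_n=o(|n|^k)$, which follows from $c_n=o(|n|^2)$): if $S$ is Abel-summable to $s$ at a point $\theta$, then the $k$-th generalized symmetric (Riemann) derivative $D^k\Phi(\theta)$ exists and equals $s$; applied at every $\theta$ with $s=0$, this gives $D^k\Phi\equiv0$ on $\RR$. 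Second, the generalized Schwarz theorem of de~la~Vall\'ee~Poussin: a continuous function whose $k$-th symmetric derivative vanishes identically is a polynomial of degree $<k$. Hence $\Phi$ is such a polynomial; but a polynomial of degree $<k$ minus $c_0\theta^k/k!$ can be $2\pi$-periodic only if $c_0=0$ and the polynomial is constant, say $\Phi\equiv\beta$. Then $\sum_{n\ne0}c_n(in)^{-k}e^{in\theta}\equiv\beta$, which forces $c_n=0$ for all $n\ne0$ as well. So $S$ is the zero series, and $u\equiv0$.

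The delicate point — and the reason both the exponent $2$ and the ``$o$'' are essential — is that one only controls the coefficients by $c_n=o(|n|^2)$, the borderline case, whereas the textbook Riemann--Cantor argument runs with $c_n\to0$. Two adjustments are thereby forced. First, one cannot take the merely twice-integrated series as the Riemann function, since $\sum c_n n^{-2}e^{in\theta}$ need not behave well; one integrates a couple of extra times, which is harmless in itself. Second, and more substantially, it is the ``$o$'' rather than ``$O$'' that one uses to upgrade ``Abel-summable to $0$'' into ``$D^k\Phi=0$'' \emph{pointwise}: with only $c_n=O(|n|^2)$ one would learn merely that $\Phi$ lies in a Zygmund smoothness class, which is not enough — and without a growth hypothesis uniqueness indeed fails for radial limits, cf.\ Proposition~\ref{proposition}. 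So I expect the real work to lie in carrying out the Abel-to-Riemann-derivative passage so that equality holds at \emph{every} boundary point with the ``$o$'' in force; once that is in hand, the generalized Schwarz theorem and the periodicity bookkeeping are routine.
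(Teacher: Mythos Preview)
Your argument is essentially Shapiro's original proof, and it is correct in outline; but it is a genuinely different route from the one the paper takes. The paper proves this statement as case~(a) of Theorem~\ref{te1} by a purely function-theoretic scheme going back to Wolf: a Baire category argument on the set of boundary points where $u$ extends continuously, reduction to a local half-plane problem, Schwarz reflection, a change of variable $v(z)=u(1/z)$, Domar's quantitative $\log$--$\log$ theorem to kill the singularity at $\theta=0$ in the growth estimate, and finally a Phragm\'en--Lindel\"of argument. No Fourier expansion or trigonometric-series uniqueness enters at all.

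Your approach is more elementary and self-contained for this particular statement, and it explains very transparently why the exponent $2$ and the ``$o$'' are sharp: they translate directly into the coefficient bound $c_n=o(|n|^2)$, which is exactly the borderline for the Abel--to--Riemann-derivative passage with $k=4$. The paper's approach, on the other hand, is what buys the rest of Theorem~\ref{te1}: the same scheme handles the exponential and double-exponential growth scales (b)--(d) with only a change of conformal map, it works under the one-sided hypothesis on $M_r(u)$ rather than $M_r(|u|)$, and it localizes to give the half-plane and maximum-principle versions used in the operator-theoretic applications. Your trigonometric-series method does not extend to those regimes.

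One small slip to flag: in your Abelian step you write that one uses ``the growth $c_n=o(|n|^k)$''. That is too weak; with $k=4$ the hypothesis $c_n=o(|n|^4)$ would not suffice to conclude $D^4\Phi=s$ from Abel summability. What is actually used (and what you correctly invoke in your final paragraph) is the sharp bound $c_n=o(|n|^2)=o(|n|^{k-2})$. You clearly understand this, but the sentence as written misstates the hypothesis.
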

\begin{theorem} {\rm(F.~Wolf, \cite[Section 7]{WOL})} Let $k>0$. If $u$ is harmonic in $\D$
and
\begin{gather*}
\log^+M_r(|u|)=o((1-r)^{-\pi/(2\arctan (1/k))}),\qquad r\to 1-,\\
\lim_{z=re^{i\theta}\to
e^{i\varphi},\,|\varphi-\theta|<k(1-r)}u(z)=0, \qquad  \varphi\in [0,2\pi],
\end{gather*}
then $u=0$.
\label{t2}
\end{theorem}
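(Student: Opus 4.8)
The plan is to derive from the two hypotheses that $u$ is bounded in some annulus $\{r_{0}<|z|<1\}$; granting this, $u$ is the Poisson integral of a bounded boundary function, and since each approach region $\{re^{i\theta}:|\theta-\varphi|<k(1-r)\}$ is an ordinary Stolz angle at $e^{i\varphi}$ (of half-aperture $\arctan k$), Fatou's theorem together with the second hypothesis shows that this boundary function vanishes a.e., whence $u\equiv0$. Thus the whole problem is the boundedness assertion.

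To obtain boundedness I would first produce, by a Baire category argument, a boundary arc of uniform control. The sets
\[
F_{N}=\bigl\{e^{i\varphi}\in\T:\ |u(re^{i\theta})|\le N\ \text{whenever}\ |\theta-\varphi|<k(1-r)\ \text{and}\ 1-\tfrac1N<r<1\bigr\}
\]
are closed and their union is $\T$ (the second hypothesis gives $\varphi\in F_{N}$ for $N$ large), so some $F_{N}$ contains an arc $I=(\alpha,\beta)$; then $|u|\le N$ on $\Omega_{I}:=\{re^{i\theta}:\dist(\theta,I)<k(1-r),\ r_{0}<r<1\}$, a region containing a full one-sided neighbourhood of each interior point of $I$. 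Running the same argument on subarcs shows that $u$ is bounded near a dense open subset of $\T$; let $B$ be the resulting (open, non-empty) set of boundary points in a neighbourhood of which $u$ is bounded.

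The heart of the proof is a Phragm\'en--Lindel\"of propagation showing $B=\T$. Near an endpoint $e^{i\alpha}$ of an arc $I$ as above, $\Omega_{I}$ fills a sector of opening $\tfrac{\pi}{2}+\arctan k$ with vertex $e^{i\alpha}$, so the adjacent part $V$ of $\D\setminus\overline{\Omega_{I}}$ is a (truncated) sector with vertex $e^{i\alpha}$ of opening exactly $\tfrac{\pi}{2}-\arctan k=\arctan(1/k)$, bounded by the slanted edge of $\partial\Omega_{I}$ (on which $|u|\le N$), by the inner arc $|z|=r_{0}$, and by an arc of $\partial\D$ lying over $\T\setminus\overline I$, along which the hypotheses at those boundary points give $u$ nontangential limit $0$ while the growth bound controls its size. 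Straightening the corner of $V$ by a power map (equivalently mapping $V$ onto a half-plane and sending $e^{i\alpha}$ to $\infty$), the hypothesis $\log^{+}M_{r}(|u|)=o\bigl((1-r)^{-p}\bigr)$ with $p=\pi/(2\arctan(1/k))$ becomes a borderline Phragm\'en--Lindel\"of growth of ``linear'' type: the extra factor $2$ in $p$ is exactly accounted for by the fact that the quantity $1-|z|$ governing the hypothesis is comparable to $|z-e^{i\alpha}|$ along the interior edge of $V$ but only to $|z-e^{i\alpha}|^{2}$ as one moves along $\partial\D$. One then chooses a nonnegative harmonic (or subharmonic) comparison function on $V$ that stays bounded on the controlled part of $\partial V$ while growing like $(1-|z|)^{-p}$ towards the $\partial\D$-edge, applies the maximum principle to $\pm u$ minus a small multiple of it, and lets that multiple tend to $0$ --- it is precisely the little-$o$ that makes this borderline comparison close. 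This puts $V$, hence a two-sided neighbourhood of $e^{i\alpha}$, into $B$; since the bound produced is controlled only by $N$ and $\max_{|z|=r_{0}}|u|$, the step size is bounded below, and applying this at the endpoints of arcs repeatedly gives $B=\T$ after finitely many steps.

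I expect the genuine difficulty to lie entirely in this Phragm\'en--Lindel\"of step: calibrating the exponent $p=\pi/(2\arctan(1/k))$ to the conformal geometry of the corner, constructing a comparison function whose growth towards $\partial\D$ is admissible yet still large enough, and --- most delicately --- controlling $u$ along the boundary-circle edge of $V$, where one has only a restricted limit rather than an honest bound, so that the approach-region hypothesis at the points of $\T\setminus\overline I$ must be fed back into the estimate; keeping all constants uniform through the iteration is an additional, though routine, bookkeeping matter.
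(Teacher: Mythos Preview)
Your overall architecture---Baire category to find an arc of uniform control, then a local Phragm\'en--Lindel\"of step at the corners---is the same as the paper's (and Wolf's). But two points are set up in a way that does not close.

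First, the Baire step and the geometry of the local problem. You apply Baire to all of $\T$, obtain an arc $I$ of uniform Stolz control, and then try to push $B$ \emph{past} the endpoint $e^{i\alpha}$ of $I$. In your region $V$ the circle edge then lies over $\T\setminus\overline I$, where you have only the pointwise angular-limit hypothesis and no uniform bound; you yourself flag this as ``most delicate,'' and indeed there is no mechanism in your argument for turning those pointwise limits into boundary control. The paper avoids this entirely: it applies Baire to the \emph{bad} set $S=\{\theta:u$ is not continuous near $e^{i\theta}\}$, producing an arc $I$ with $S\cap I\neq\emptyset$ and uniform Stolz bounds at the points of $S\cap I$. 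One then looks at a gap $J_1\subset R\cap I$ with endpoints $\varphi_1,\varphi_2\in S$. Inside $J_1$ the function $u$ extends continuously to $\partial\D$ with value $0$; the only trouble is at the vertices $e^{i\varphi_j}$, and the local problem there has one edge on which $u$ is bounded (the Stolz edge from $\varphi_j$, by Baire) and one edge on which $u$ is continuous and vanishes (the arc over $J_1$). No iteration is needed: boundedness on the sector over $J$ already contradicts $S\cap I\neq\emptyset$.

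Second, and more seriously, your account of the exponent $p=\pi/(2\arctan(1/k))$ is wrong. The sector $V$ has opening $\arctan(1/k)$; a direct Phragm\'en--Lindel\"of there requires growth $o((\cdot)^{-\pi/\arctan(1/k)})$, twice the exponent in the hypothesis. Your remark that ``$1-|z|$ is comparable to $|z-e^{i\alpha}|^2$ along $\partial\D$'' does not hold (along $\partial\D$ one has $1-|z|=0$) and does not supply the missing factor. In the paper the factor of $2$ comes from \emph{Schwarz reflection}: since $u$ is continuous with $u=0$ on the boundary arc of the gap $J_1$, one reflects across it and obtains a harmonic function on a sector of opening $2\arctan(1/k)$, for which $\pi/(2\arctan(1/k))$ is the correct Phragm\'en--Lindel\"of order. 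This is exactly why setting up the local problem so that the circle edge lies in $R$ (not in unknown territory) is essential. After reflection the growth bound, transported by the power map, still carries a singularity along the reflected axis (it reads $\log^+\psi(r,\theta)=o(r/|\theta|^{k_1})$); the paper removes this with Domar's quantitative $\log$--$\log$ theorem before invoking the standard Phragm\'en--Lindel\"of argument. Your sketch has no device for this singularity either.
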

Both results are the best possible as far as the growth assumptions on
$u$ are concerned, see e.g. \cite{BoChTo07}. On the other hand, it
is natural to ask, for which approach restrictions, the polynomial
limit growth of $|u|$ with respect to $(1-r)^{-1}$ in the above
results can be replaced by the exponential one; or more generally,
what is the relation between the size of the approach domain and
the limit growth of $|u|$ in boundary uniqueness theorems for
harmonic $u$. In a different situation, a relation between
the (Dirichlet spaces) smoothness and the boundary limits along tangential approach domains
was observed in \cite{NaRuSh82}.

Note that  as a consequence of the results in \cite{NIK, BOR}, in the situations we consider here, it
is sufficient to impose growth restrictions just on $M_r(u)$. Next
we define a scale of approach domains. We say that a
non-decreasing continuous function $h:[0,1]\to[0,1]$, $h(0)=0$, is
an approach function. Given an approach function $h$ consider
$\Delta^h=\{x+iy:|x|\le h(y),\, 0<y<1\}\subset \mathbb C_+
=\{x+iy\in\mathbb C:y>0\}$. Given $\varphi\in[0,2\pi]$, the
function $f_\varphi:z\mapsto e^{i\varphi}(i-z)/(i+z)$ maps
$\mathbb C_+$ onto the unit disc. Set
$\Omega^h(\varphi)=f_\varphi(\Delta^h)$. The following two
theorems give a partial answer to the above question in the case
of polynomial type approach domains.

\begin{theorem}\label{mmm}
Let $u$ be harmonic in the unit disc $\mathbb D$,
\begin{equation}
\lim_{z\in \Omega^h(\varphi),\,z\to e^{i\varphi}}u(z)=0, \qquad
\varphi\in [0,2\pi]. \label{lim}
\end{equation}

{\rm (a) (V.~L.~Shapiro)} If $h(t)=0$ and
$$
M_r(u)=o((1-r)^{-2}),\qquad r\to 1-,
$$
then $u=0$.

{\rm (b)} If $h(t)=ct^3$, $c>0$, and
$$
\log^+M_r(u)=o((1-r)^{-1}),\qquad r\to 1-,
$$
then $u=0$.

{\rm (c) (F.~Wolf)} If $h(t)=ct$, $c>0$,  and
$$
\log^+M_r(u)=o((1-r)^{-\pi/(2\arctan (1/c))}),\qquad r\to 1-,
$$
then $u=0$.

{\rm (d)} If $h(t)=t^\gamma$, $0<\gamma<1$,  and
$$
\log^+\log^+M_r(u)=o((1-r)^{\gamma-1}),\qquad r\to 1-,
$$
then $u=0$.
\label{te1}
\end{theorem}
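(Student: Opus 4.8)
We treat only (b) and (d); (a) and (c) are the theorems of Shapiro \cite{SHA} and Wolf \cite{WOL} and may be quoted as such. The plan is to transfer the problem to the half plane and run a Phragm\'en--Lindel\"of argument in a domain whose shape at the boundary is dictated by $h$. Put $v=u\circ f_0$, where $f_0(w)=(i-w)/(i+w)$ maps $\mathbb{C}_+$ onto $\mathbb{D}$, so that $v$ is harmonic in $\mathbb{C}_+$. The approach domains $\Omega^h(\varphi)$ become the images $A_t(\Delta^h)$, $t\in\mathbb{R}$, under the automorphisms $A_t(w)=(w+t)/(1-tw)$ of $\mathbb{C}_+$, together with one approach domain attached to the boundary point $\infty$; for $h(y)=cy^3$ and $h(y)=y^\gamma$ these are, near their vertices, of the same type $\Delta^h$ up to a bounded rescaling, uniformly for $t$ in compact sets. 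Since $1-|f_0(w)|\asymp(\operatorname{Im} w)/(1+|w|^2)$, the growth hypothesis translates into $\log^+|v(x+iy)|=o(1/y)$ in case (b) and $\log^+\log^+|v(x+iy)|=o(y^{\gamma-1})$ in case (d), locally uniformly in $x$ (with a correspondingly weaker bound near $\infty$); by the results of \cite{NIK,BOR} quoted above we may take these to be bounds for $|v|$, not merely for $v$. Finally, since $\{t+iy:0<y<1\}\subset t+\Delta^h$, the function $v$ has radial limit $0$ at every point of $\mathbb{R}$, so once $v$ is shown to be \emph{bounded} in $\mathbb{C}_+$ it is the Poisson integral of its $L^\infty$ boundary function, which must then vanish a.e.\ by Fatou's theorem, giving $v\equiv0$. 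Thus it suffices to prove that $v$ is bounded in $\mathbb{C}_+$.

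Next comes a localization of the hypothesis by Baire's theorem. Fix $\e>0$. The sets $E_n=\{t\in\mathbb{R}:|v|\le\e\text{ on }(t+\overline{\Delta^h})\cap\{0<\operatorname{Im}\le 1/n\}\}$ are closed, by continuity of $v$ and of $t\mapsto t+\overline{\Delta^h}$, and their union is $\mathbb{R}$, because for each $t$ the truncated approach domain shrinks to the vertex as $n\to\infty$. Hence some $E_N$ contains an interval $I$. Combined with the pointwise limits along the remaining approach domains, this yields $|v|\le\e$ on a region of the form $\{x+iy:0<y<\psi_\e(x)\}$ with $\psi_\e>0$ on $\mathbb{R}$ and $\psi_\e\ge\delta_\e>0$ on $I$, as well as on a neighbourhood inside $\mathbb{C}_+$ of the point $\infty$.

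The heart of the argument is then to carry this smallness over to all of $\mathbb{C}_+$ with the sole help of the growth bound, and here the shape of $h$ is used essentially. In case (b) one constructs a domain $G$ which near $\infty$ is forced inside the approach domain attached to $\infty$ --- for $h(y)=cy^3$ this is a thin ``funnel'' of width $\asymp 1/Y$ at height $Y$ --- while near $\mathbb{R}$ it is bounded by part of the graph of $\psi_\e$, so that $\partial G$ stays inside the set where $v$ is already known to be $\le\e$, except along a small ``far'' piece on which only the growth bound is available. The conformal length of such a funnel from height $Y_0$ to height $Y$ is $\asymp Y^2$, so a function obeying $\log^+|v|=o(Y)$ there lies strictly below the Phragm\'en--Lindel\"of threshold; applying the two-constants theorem to a subharmonic majorant of $|v|$ (for instance $\log(\e^2+|v|^2)^{1/2}$), the harmonic measure of the far piece of $\partial G$ seen from a fixed interior point is exponentially small in $Y_0^2$, which suffices to absorb both the size of $|v|$ there and the $\e$-dependence of the bounds inherited from the localization step; letting $\e\to0$ then gives $v$ bounded, hence $v\equiv0$. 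Case (d) follows the same pattern with one extra layer, to accommodate the doubly exponential admissible growth: one either precedes the argument of (b) by the conformal change of variable $w\mapsto\log w$, which converts a doubly-exponential bound into a singly-exponential one at the cost of a logarithmic distortion of the geometry, or one exploits directly that for $h(y)=y^\gamma$ the complement, near a boundary vertex, of the approach domain is a genuine cusp of profile $y\asymp x^{1/\gamma}$; in either case the restriction $0<\gamma<1$ --- equivalently $1/\gamma>1$ --- is exactly what makes this region a cusp rather than a sector, and so produces harmonic measure decaying fast enough to beat the doubly-exponential growth.

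The main obstacle is the quantitative bookkeeping in this last step: one must choose $G$ so that its boundary remains inside the region where $v\le\e$ while its conformal geometry near $\infty$ is thin enough that the Warschawski-type estimate for the harmonic measure of the far part of $\partial G$ dominates simultaneously (i) the admissible growth of $|v|$ there and (ii) the dependence on $\e$ coming from the Baire step, so that the final estimate at a fixed interior point still tends to $0$ as $\e\to0$. The exponent $3$ in (b) and the range $0<\gamma<1$ in (d) are precisely the thresholds at which these competing requirements can be met.
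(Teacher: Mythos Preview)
Your outline shares the high-level shape of the paper's argument (a Baire step followed by a Phragm\'en--Lindel\"of/two-constants estimate), but the execution diverges at the crucial localization and the subsequent analytic step is not carried through; as written there is a genuine gap.

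The paper's Baire step is different from yours and this matters. It is applied not to all boundary points but to the closed set $S$ of $\theta$ at which $u$ \emph{fails} to extend continuously to $e^{i\theta}$; one obtains an interval $I$ with $S\cap I\neq\emptyset$ on which $|u|$ is uniformly bounded along the approach domains based at points of $S\cap I$. On each complementary interval $J_1\subset R\cap I$ one therefore has genuine continuous extension by $0$, and the problem reduces to a \emph{local} one in a rectangle $Q$: $u$ harmonic, $u\equiv 0$ on $(0,\delta)$, $u$ bounded on a single approach domain $\Delta^h$ emanating from the bad endpoint $0$, and the given growth bound. The point is that one may now apply \emph{Schwarz reflection} across $(0,\delta)$; this is the device that makes the subsequent conformal change and Phragm\'en--Lindel\"of arguments work. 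Your global Baire step yields only an interval on which $|v|\le\varepsilon$ below some height (and the interval itself depends on $\varepsilon$), so reflection is unavailable, and the domain $G$ you describe has no controlled geometry outside that interval.

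The second missing ingredient is how the paper tames the angular singularity in the growth bound. After reflection and a conformal map sending the bad point to $\infty$ (different in each case; in (b) it is $z\mapsto 1/z$, in (d) it is $z\mapsto((2(1-\gamma)/\pi\gamma)\log z)^{\gamma/(\gamma-1)}$), the function $v$ obeys an estimate of the form $\log^+\psi(r,\theta)=o(r/|\theta|)$ (resp. $\log^+\log^+\psi(r,\theta)=o(\log r/|\theta|^{1-\gamma})$), singular as $\theta\to 0$. The paper removes this singularity by applying Domar's quantitative $\log$--$\log$ theorem to the dyadic rescalings $v_n(z)=v(2^nz)$ on a fixed rectangle, obtaining uniform bounds $\log^+|v|=o(|z|)$ in the right half-plane (in (b)--(c)) and a two-scale bound in (d). Only then does a Phragm\'en--Lindel\"of argument apply; in case (d) a tailor-made lemma (harmonic-measure estimate in a domain with boundary $y=x\exp(-(\log x/N)^2)$ via the Ahlfors--Carleman inequality) is needed to match the doubly-exponential growth. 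None of this machinery appears in your sketch, and the admission that ``the main obstacle is the quantitative bookkeeping'' is accurate: without a device of Domar type the two-constants estimate you propose cannot absorb the growth on the ``sides'' of $G$ (where, near $\mathbb{R}$, $\log^+|v|=o(1/y)$, and at large $|x|$ the transplanted bound is $o(|x|^2/y)$, far beyond what a strip or funnel Phragm\'en--Lindel\"of permits).
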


Of course, the result in (c) remains valid if $h(t)=ct+c_1t^2$
for small $t$ and for some $c_1\in\mathbb R$.

The statements of Theorem~\ref{te1} are sharp with respect to the
polynomial scale of approach domains.

\begin{proposition}\label{proposition}

{\rm (a)} If $h(t)=o(t^3)$, $t\to 0$, then there exists
a function $u\not=0$ harmonic in $\mathbb D$ and satisfying \eqref{lim}
such that
$$
M_r(|u|)=O((1-r)^{-2}),\qquad r\to 1-.
$$

{\rm (b)} If $1<\gamma\le 3$, $h(t)=t^\gamma$, $\varepsilon>0$,
then there exists a function $u\not=0$ harmonic in $\mathbb D$ and satisfying \eqref{lim} such that
$$
\log^+M_r(|u|)\le\varepsilon (1-r)^{-1}, \qquad r \in (0,1).
$$

{\rm (c)} If $h(t)=ct$, $c>0$, $\varepsilon>0$,
then there exists a function $u\not=0$ harmonic in $\mathbb D$ and satisfying \eqref{lim} such that
$$
\log^+M_r(|u|)\le\varepsilon(1-r)^{-\pi/(2\arctan (1/c))}, \qquad
r \in (0,1).
$$

{\rm (d)} If $0<\gamma<1$, $h(t)=t^\gamma$,
then for some $k=k(\gamma)$ there exists a function $u\not=0$ harmonic in
$\mathbb D$ and satisfying \eqref{lim} such that
$$
\log^+\log^+M_r(|u|)\le k(1-r)^{\gamma-1},\qquad r \in (0,1).
$$
\label{pp}
\end{proposition}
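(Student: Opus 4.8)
In each of the four cases the goal is the same: exhibit a harmonic $u\not\equiv 0$ on $\mathbb D$ that satisfies \eqref{lim} for \emph{every} $\varphi\in[0,2\pi]$ and obeys the indicated bound on $M_r(|u|)$. Since $f_\varphi=e^{i\varphi}f_0$, one has $\Omega^h(\varphi)=e^{i\varphi}\,\Omega^h(0)$, so the approach regions form a single rotation orbit; equivalently, after the change of variable $\eta=-i\log z$ (i.e.\ $z=e^{i\eta}$), which realizes $\mathbb D$ as a quotient of the half-plane $\mathbb C_+$ and carries $\Omega^h(\varphi)$ onto a translate $\Delta^{\tilde h}+\varphi$ of the funnel $\Delta^{\tilde h}$ (with $\tilde h$ equal to $h$ up to an irrelevant constant factor), the approach regions become the orbit $\{\Delta^{\tilde h}+a:a\in\mathbb R\}$ under real translations, and the radial depth $1-|z|$ becomes $\operatorname{dist}(\eta,\mathbb R)$. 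In cases (b), (c), (d) the plan is to take $u=\operatorname{Re}U$ with $U$ holomorphic on $\mathbb D$ and $U(z)\to0$ as $z\to e^{i\varphi}$ inside $\Omega^h(\varphi)$ for all $\varphi$; then $|u|\le|U|$ bounds $M_r(|u|)$ and $u\not\equiv0$ because $U$ is nonconstant. Case (a) allows only polynomial growth, so there $U$ cannot be an exponential and a cancellation mechanism is needed instead.

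For (b) and (c) the plan is to seek the pull-back $V=U\circ e^{i\cdot}$ on $\mathbb C_+$ in the form (a $2\pi$-periodic version of) $V(\eta)=\exp\!\bigl(-\lambda\,\omega(\eta)^{-\beta}\bigr)$, where $\omega$ is $2\pi$-periodic and vanishes simply at each point of $2\pi\mathbb Z$ (for instance $\omega(\eta)=\tan(\eta/2)$), and $\lambda>0$, $\beta>0$ are chosen as follows. The exponent $\beta$ is fixed, via the opening of the sector complementary to $\Delta^{\tilde h}$ at a real point, so that $\operatorname{Re}\bigl(-\lambda\,\omega(\eta)^{-\beta}\bigr)\to-\infty$ along every translate of the funnel; a Phragm\'en--Lindel\"of estimate in the complementary sectors, where $V$ may grow, then gives $\log|V(\eta)|=O\bigl(\operatorname{dist}(\eta,\mathbb R)^{-\beta}\bigr)$, which back on $\mathbb D$ reads $\log^+M_r(|u|)=O\bigl((1-r)^{-\beta}\bigr)$. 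One checks that $\beta$ comes out equal to $1$ in case (b) and to $\pi/(2\arctan(1/c))$ in case (c), while $\lambda$ is a free parameter, so that taking $\lambda$ small makes the constant in front of $(1-r)^{-\beta}$ smaller than the prescribed $\varepsilon$. A residual point is that the singularities of $\omega$ lie on the lattice $2\pi\mathbb Z$, not on all of $\mathbb R$, so one must additionally ensure $\operatorname{Re}\bigl(-\lambda\,\omega(\eta)^{-\beta}\bigr)\to-\infty$ along the funnels over the non-lattice points, or pass to a suitable superposition of translates of $V$, before descending to $\mathbb D$.

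Case (d) is handled in the same spirit, but the wide regions $h(t)=t^\gamma$, $0<\gamma<1$, both constrain $\beta$ and leave room for an additional exponential: one builds a holomorphic $G$ on $\mathbb D$ with $\operatorname{Re}G\to-\infty$ along every $\Omega^h(\varphi)$ and $\max_{|z|=r}\operatorname{Re}G(z)\le\exp\bigl(k(1-r)^{\gamma-1}\bigr)$, and puts $u=\operatorname{Re}e^{G}$, so that $M_r(|u|)\le e^{e^{k(1-r)^{\gamma-1}}}$ and \eqref{lim} holds; the exponent $\gamma-1$ again comes from the geometry of the funnel. For case (a) the plan is to take
$$
U(z)=\sum_{n\ge 1}c_n\Bigl(\frac{\zeta_n+z}{\zeta_n-z}\Bigr)^{2},
$$
with $\{\zeta_n\}$ a dense sequence in $\mathbb T$ arranged scale by scale and $\{c_n\}$ summable with a carefully tuned rate of decay. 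On $|z|=r$ one has $\bigl|(\zeta_n+z)/(\zeta_n-z)\bigr|^{2}=O((1-r)^{-2})$, so $M_r(|u|)=O((1-r)^{-2})$; and since $h(t)=o(t^3)$, the angular width $h(1-|z|)$ of $\Omega^h(\varphi)$ is so small relative to the scale $1-|z|$ on which $(\zeta_n-z)^{-2}$ varies compared with its own size $(1-|z|)^{-2}$ that, along $\Omega^h(\varphi)$, consecutive terms of the series essentially telescope; with the right $\{c_n\}$ the partial sums of $U$ then tend to $0$, so $u=\operatorname{Re}U$ satisfies \eqref{lim} while $u\not\equiv0$. The exponent $3$ is exactly the threshold at which the oscillation of a second-order Cauchy kernel across a window of width $h(1-r)$ becomes negligible against its own size.

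The heart of the matter — and the main obstacle — is the step common to the second and third paragraphs: making the function vanish along the approach regions \emph{uniformly over all} $\varphi$ while staying within the prescribed growth. This is a sharp two-sided Phragm\'en--Lindel\"of-type balance between the opening of the complement of $\Delta^{\tilde h}$ and the admissible growth exponent; the delicate parts are (i) passing from the lattice model to all boundary points $e^{i\varphi}$ without enlarging the growth, and (ii) getting the growth exponents exactly right — for instance exactly $\pi/(2\arctan(1/c))$ in (c), since by Theorem~\ref{te1}(c) a smaller exponent would force $u\equiv0$.
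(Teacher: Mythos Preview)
Your plan contains a real gap in cases (b)--(d), and an unnecessarily baroque idea in (a).

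\medskip
\textbf{Case (a).} The paper simply takes
\[
u(re^{i\theta})=\sum_{n\ge1}n\,r^n\sin(n\theta)=\Im\frac{z}{(1-z)^2}.
\]
This is harmonic with $M_r(|u|)=O((1-r)^{-2})$; on $\mathbb T\setminus\{1\}$ the analytic function $z/(1-z)^2$ is real, so $u$ has zero radial limit there, and a two-line computation shows that along the approach region at $1$ (where $h(t)=o(t^3)$) the imaginary part also vanishes. Your dense-sequence-with-telescoping construction is not needed; the single explicit function already does the job, and the exponent~$3$ falls out of the same computation you describe.

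\medskip
\textbf{Cases (b)--(d).} The step you flag as a ``residual point'' is in fact the heart of the problem, and your proposed remedies do not work. If $V(\eta)=\exp(-\lambda\,\omega(\eta)^{-\beta})$ with $\omega$ vanishing on $2\pi\mathbb Z$, then at a non-lattice real point $a$ the function $V$ has the nonzero limit $\exp(-\lambda\,\omega(a)^{-\beta})$, so $u=\Re V$ certainly does not tend to $0$ along $\Delta^{\tilde h}+a$. ``Ensuring $\Re(-\lambda\,\omega^{-\beta})\to-\infty$ along the funnels over non-lattice points'' is impossible for this $V$, and ``a suitable superposition of translates'' is not a construction: a translation-invariant average will either be constant or destroy the growth control you need.

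The device the paper uses (following \cite{BoChTo07}) is different and bypasses this obstacle entirely. One works locally in the half-plane with a model function
\[
f_0(z)=\exp\Bigl(\frac{\varepsilon}{z}-\frac{1}{z^{\delta}}\Bigr),\qquad \Re z>0,
\]
which is \emph{real} on the positive real axis (so $\Im f_0=0$ there automatically) and is exponentially small in the strip $y^\gamma<x<2y^\gamma$ adjacent to the funnel. One then multiplies $f_0$ by a $C^2$ cutoff supported in $\{x>y^\gamma\}$ to obtain $f_2$, which vanishes identically inside $\Delta^h$ and whose $\bar\partial f_2$ is bounded (the cutoff lives where $f_0$ is tiny). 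Solving $\bar\partial f_3=\bar\partial f_2$ with the Cauchy kernel gives a continuous correction, and one sets $u=\Im(f_2-f_3)+f_4$ with $f_4$ a bounded harmonic adjustment. The boundary condition \eqref{lim} is now free: $\Im f_2=0$ both on $(0,1)$ (because $f_0$ is real there) and in $\Delta^h$ (because $f_2\equiv0$ there), while $f_3,f_4$ are continuous up to the boundary. No superposition over boundary points is involved; the singularity sits at a single point and the vanishing at all other boundary points comes from taking the \emph{imaginary} part of a function real on the real axis, not from forcing $|U|\to0$ everywhere.

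In short: replacing your $\Re U$ by $\Im U$ and choosing the model to be real on $\mathbb R$ handles the non-singular boundary points for free; what remains is to make the function vanish inside the funnel at the singular point without losing analyticity, and that is exactly what the cutoff\,/\,$\bar\partial$ step buys you.
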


Next we pass to an application of our
function-theoretical results (in their half-plane version) to the
study of spectral properties of operator groups on Banach spaces.

 Let $X$ be a Banach space, and let
$(T(t))_{t \in \mathbb R}$ be a $C_0$-group on $X$ with generator
$A$ growing at most polynomially, i.e. such that $\|T(t)\|\le M (1
+ |t|^a), a \ge 0.$ There is an extensive literature on spectral
properties of polynomially growing (and, even more generally, non
quasianalytic) $C_0$-groups on Banach spaces, see e.g.
\cite{AmMoGe96}, \cite{Ba79}, \cite{BaWo83},
\cite{Jo82}--\cite{Kr2},
 \cite{Ma81}, \cite{Ma86}.
 However, the problem of description of spectral maximal
subspaces of the groups (or equivalently, of the generators) in
terms of nontangential boundary behavior of local resolvents of
their generators has not been addressed so far (see, though,
\cite{BaChTo02,Jo92}). On the other hand, such characterizations look
very natural being interpreted as the study of regularity of
operator-valued distributions (measures) on the real line by means
of their Poisson integrals.

Denote by $\sigma(x)$ the local spectrum of $x$ (i.e. the
complement of the set of points $\lambda$ such that the local
resolvent $R(\lambda,A)x=(\lambda-A)^{-1}x$ is analytic in a
neighborhood of $\lambda$). For general information on the local
spectra see \cite{LaNe00}, \cite{Mu07}. Let
$$
X(F):=\{x\in X : \sigma (x) \subset  iF \}
$$
be the spectral (maximal) subspace of $A$ (equivalently of $(T(t))_{t \in \mathbb R}$, see, for instance,  \cite{Jo82})
corresponding to the closed subset $F$ of $\mathbb R,$ and let
$R(\lambda, A):=(\lambda-A)^{-1}$ be the resolvent of $A$ defined
at least on $\mathbb C\setminus i\mathbb R.$
 Recall   that if $f$ belongs to the  space of Schwartz test functions ${\mathcal
S} (\mathbb R)$, then
\begin{equation*}\label{pois}
 \int_{-\infty}^{\infty} e^{-\alpha |t|} \hat f(t) T(t)\, dt
=\int_{-\infty}^{\infty}  f(\beta)\left(R(\alpha+i\beta,
A)-R(-\alpha+i\beta, A) \right) \, d\beta,
\end{equation*}
where $\hat f(t):= \int_{\mathbb R} e^{-ist} f(s)\, ds$ and the
integrals converge in the strong sense.
 Hence the operator-valued distribution
\begin{eqnarray*}
E &:& {\mathcal S} (\mathbb R)\mapsto \mathcal L(X),\\
 \langle E
,f \rangle &:=& \lim_{\alpha \to 0+ } \int_{\mathbb R}
f(\beta)\left( R(\alpha+i\beta, A)-R(-\alpha+i\beta, A) \right) \,
d\beta\\&=& \int_{\mathbb R} \hat f(t) T(t) \, dt
\end{eqnarray*}
is well-defined.

Let $x$ be such that  $\sigma(x)$ is compact, and
$E_x:=E(\cdot)x.$ Then, denoting
$$D(\alpha + i\beta)=:R(\alpha+i\beta, A)-R(-\alpha+i\beta, A),
\qquad \alpha \neq 0,$$ and following the observation in \cite[p.
139]{Jo92}, we obtain that
\begin{equation}\label{poisson}
D(\alpha +i\beta)x = 2\pi\bigl \langle E_x,
\frac{1}{\pi}\frac{\alpha}{\alpha^2 + (\beta - \cdot)^2} \bigr
\rangle = 2\pi\bigl( E_x * P_{\alpha}\bigr)(\beta),
\end{equation}
where \eqref{poisson} is understood as the distributional
convolution of the distribution having compact support with  the
$C^{\infty}(\mathbb R)$-function.  (Remark  also  that
$$
D(\alpha +i\beta)= \widehat {e^{-\alpha t} T(\cdot)} (\beta),
\qquad \alpha >0.)
$$
Thus $D(\cdot)x$ can be treated as the Poisson integral of $E_x.$
(Using distributions in ${\mathcal D}_{L_p}'$ one can write down
the Poisson integral of $E_x$ without any restrictions of
$\sigma(x),$ see e.g. \cite{AlGuPe07}. This however would
unnecessarily complicate the presentation.)

If $X$ is a Hilbert space with inner product $(\cdot, \cdot)$ and
$(T(t))_{t \ge 0}$ is a unitary $C_0$-group in $X$ so that $-iA$
is self adjoint, then the distribution $E$ can be identified with
a multiple of the spectral measure  $E(\cdot)$ of  $-iA$, and the
relation \eqref{poisson} can be interpreted using the usual
convolution. In particular,
\begin{equation}\label{poisson1}
\bigl( D(\alpha +i\beta)x ,x \bigr) = 2 \int_{-\infty}^{\infty}
\frac{\alpha \,  d \left( E(t)x, x \right)}{\alpha^2 + (\beta -
t)^2}, \qquad x \in X.
\end{equation}

It is well-known that the smoothness of a (finite) measure $\mu$
determines the boundary behavior of its Poisson integral
$\mu*P_\alpha$. On the other hand, in many cases the
radial convergence  of $\mu * P_\alpha$ may imply certain
regularity of $\mu,$ see \cite{Lo43}, \cite{Ge57}, \cite{Do63}, and
\cite{BrCh90} for a general account of tauberian results of this
type.

For instance, if one has in \eqref{poisson1}
$$\lim_{\alpha \to 0+}\bigl( D(\alpha +i\beta)x ,x
\bigr)=0, \qquad \beta \in (a,b),$$ then  $E\bigl((a,b)\bigr)=0,$
see for example \cite{Lo43}.

Thus, in the case of unitary $(T(t))_{t \in\mathbb R}$, one easily
obtains   the description of spectral subspaces of $A$
(or equivalently of $(T(t))_{t \in\mathbb R}$) in terms of the Poisson
integrals  $E * P_\alpha:$
\begin{eqnarray*}\label{hilb}
X(F)= \{x \in X: \lim_{\alpha \to 0+ }D(\alpha +i\beta)x=0, \,
\beta \in \mathbb R \setminus F\}.
\end{eqnarray*}
Furthermore, if the point spectrum $\sigma_p(A)$ of $A$ is empty, then
\begin{eqnarray*}
X(F)=\{x \in X: \sup_{\alpha
> 0} \| D(\alpha +i\beta)x\|< \infty, \, \beta \in \mathbb R
\setminus F\},
\end{eqnarray*}
 cf. Theorem
\ref{groups} below. Such descriptions are of value, for instance,
in scattering theory, see e.g. \cite[Chapter 3]{BaWo83}.

If $X$ is a Banach space and/or $C_0$-group $(T(t))_{t \in \mathbb
R}$ on $X$ is polynomially bounded, $E$ is  merely a distribution
although one still has the formula \eqref{poisson}. In this
setting one could not expect in general direct relations between
the boundary behavior of $D$ and the regularity of $E.$
Nevertheless, it appears that if $(T(t))_{t \in \mathbb R}$ does
not grow too fast and the Poisson integral of $E$ has zero radial
boundary values, then $E$ is forced to be zero, thus a
distributional counterpart  of tauberian theorems  in
\cite{Lo43}, \cite{Ge57} and \cite{Ru78} holds. More precisely,
 the following statement is true.

\begin{theorem}\label{groups}
  Let $X$ be a Banach space, and let $(T(t))_{t \in \mathbb R}$
be a $C_0$-group on $X$ with  generator $A,$ such that
$\|T(t)\|\le M (1 + |t|^a)$, $a \in [0,2]$. Then for any closed $F
\subset \mathbb R$
\begin{equation}\label{d1}
X(F)= \{x \in X:  \lim_{\alpha \to 0+ }D(\alpha +i\beta)x=0, \,
\beta \in \mathbb R \setminus F\}.
\end{equation}
If $X$ is reflexive,  $\sigma_p(A)=\emptyset$ and $a \in[0,2),$
then moreover
\begin{equation}\label{d2}
X(F)= \{x \in X:  \sup_{\alpha > 0 }\|D(\alpha +i\beta)x\|<
\infty, \, \beta \in \mathbb R \setminus F\}.
\end{equation}
\end{theorem}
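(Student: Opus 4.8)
\medskip
\noindent\emph{Proof plan.} The idea is to deduce everything from the half-plane version of Theorem~\ref{te1}, treating the two inclusions in \eqref{d1} separately and reducing \eqref{d2} to \eqref{d1}. The inclusion ``$\subseteq$'' in each identity is elementary: if $x\in X(F)$, then $\sigma(x)\subseteq iF$, so $\lambda\mapsto R(\lambda,A)x$ is holomorphic on $\mathbb C\setminus iF$, and for $\beta\in\mathbb R\setminus F$ both $R(\pm\alpha+i\beta,A)x$ converge, as $\alpha\to0+$, to the value of this extension at $i\beta$; hence $D(\alpha+i\beta)x\to0$, and in particular $\sup_{\alpha>0}\|D(\alpha+i\beta)x\|<\infty$.

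For ``$\supseteq$'' in \eqref{d1}, fix $x$ with $\lim_{\alpha\to0+}D(\alpha+i\beta)x=0$ for all $\beta\in\mathbb R\setminus F$. Since $\sigma(x)\subseteq iF$ is a local condition, it suffices to fix $\beta_0\in\mathbb R\setminus F$, choose an open interval $I\ni\beta_0$ with $\overline I\subseteq\mathbb R\setminus F$, and show that $R(\cdot,A)x$ continues analytically across $iI$. Fix $x^*\in X^*$ and, for $z=\alpha+i\beta$ with $\alpha>0$, put $g(z)=\langle R(z,A)x,x^*\rangle$ and $u(z)=\langle D(\alpha+i\beta)x,x^*\rangle=g(z)-g(-\bar z)$. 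Then $u$ is harmonic on the right half-plane, being the sum of a holomorphic and an antiholomorphic function; from $D(\alpha+i\beta)x=\widehat{e^{-\alpha|t|}T(\cdot)x}(\beta)$ and $\|T(t)\|\le M(1+|t|^{a})$ we get
\[
|u(\alpha+i\beta)|\le\|x^*\|\int_{\mathbb R}e^{-\alpha|t|}\,\|T(t)x\|\,dt=O(\alpha^{-a-1}),\qquad\alpha\to0+,
\]
uniformly in $\beta$, while by hypothesis $u(\alpha+i\beta)\to0$ for each $\beta\in I$. Since $a+1\le3$, this growth lies inside the scale of Theorem~\ref{te1}: for $a<1$ one is already in the radial regime of part (a), whereas for $1\le a\le2$ one first upgrades the radial convergence on $I$ to convergence of $u$ throughout a cubic approach region over $I$ — using interior estimates for harmonic functions together with the $O(\alpha^{-a-1})$ bound — and then invokes part (b), whose logarithmic growth restriction is met trivially. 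The local half-plane form of Theorem~\ref{te1} then yields that the distributional boundary values of $u$ on $iI$ vanish, i.e.\ the one-sided boundary values of $g$ from $\{\Re z>0\}$ and from $\{\Re z<0\}$ agree across $iI$; by an edge-of-the-wedge argument (legitimate since $g$ has polynomial growth $O(|\Re z|^{-a-1})$ from each side) $g$ continues holomorphically across $iI$. As $x^*$ varies this shows $R(\cdot,A)x$ continues analytically across $iI$ (strongly, by local uniform boundedness), and since $\beta_0\notin F$ was arbitrary, $x\in X(F)$.

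For ``$\supseteq$'' in \eqref{d2} it suffices to show that, under the stated hypotheses, $\sup_{\alpha>0}\|D(\alpha+i\beta)x\|<\infty$ for $\beta\notin F$ already forces $D(\alpha+i\beta)x\to0$ as $\alpha\to0+$, after which \eqref{d1} applies. This is a tauberian step: reflexivity of $X$ produces weak limit points of $\bigl(D(\alpha_n+i\beta)x\bigr)$ along sequences $\alpha_n\to0+$, and an Abel--ergodic argument exploiting $\sigma_p(A)=\emptyset$ (so that the relevant spectral projection at $i\beta$ vanishes) identifies each such limit point with $0$; together with boundedness this gives full convergence. The strict bound $a<2$ provides the slack in the growth estimates required here and for the invocation of \eqref{d1}.

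The decisive step is the function-theoretic one: applying Theorem~\ref{te1} in a \emph{local}, one-sided form and, in order to reach exponents $a$ all the way up to $2$, matching the polynomial bound $O(\alpha^{-a-1})$ for $u$ to the cubic approach region of part (b) — equivalently, upgrading the merely radial convergence supplied by the hypothesis to convergence of $u$ throughout an approach region. The surrounding operator-theoretic machinery (moving between the local resolvent, the operator distribution $E_x$ and its Poisson integral $D(\cdot)x$; localization near each $\beta_0\notin F$; the edge-of-the-wedge gluing; and the tauberian reduction of \eqref{d2} to \eqref{d1}) is comparatively routine, though the estimates on $E_x$ are cleanest in the ${\mathcal D}'_{L_p}$ framework mentioned earlier.
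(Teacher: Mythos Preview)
Your overall architecture matches the paper's: both inclusions in \eqref{d1} are handled as you say, the scalar functions $u(\alpha+i\beta)=\langle D(\alpha+i\beta)x,x^*\rangle$ are the right objects, the growth bound $|u|=O(\alpha^{-(a+1)})$ is correct, and the passage from ``$u$ extends by zero across $iI$'' to ``$R(\cdot,A)x$ extends analytically'' via edge-of-the-wedge plus uniform boundedness is exactly what the paper does (citing \cite{BoChTo07}).

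There is, however, a genuine gap at the step you flag as decisive. For $1\le a\le 2$ you propose to upgrade the \emph{radial} convergence $u(\alpha+i\beta_0)\to0$ to convergence in a cubic region $|\beta-\beta_0|\le c\alpha^3$ ``using interior estimates for harmonic functions together with the $O(\alpha^{-a-1})$ bound''. This cannot be done with harmonic-function tools alone: the gradient estimate gives only $|\partial_\beta u|\lesssim\alpha^{-a-2}$, hence $|u(\alpha,\beta)-u(\alpha,\beta_0)|\lesssim\alpha^{1-a}$ in the cubic region, which does not tend to $0$ once $a\ge1$. More to the point, Proposition~\ref{pp}(a) exhibits a nonzero harmonic function with $M_r(|u|)=O((1-r)^{-2})$ (i.e.\ $a=1$) and radial limits $0$ everywhere; such a function cannot have cubic limits $0$, or Theorem~\ref{te1}(b) would force it to vanish. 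So the upgrade is \emph{false} for generic harmonic $u$. What makes it work here is the operator-theoretic structure: the paper's Lemma~\ref{resolvent}(i) uses the resolvent identity for $D(\alpha+i\beta)=2\alpha[\alpha^2-(A-i\beta)^2]^{-1}$ to write
\[
D(\alpha+i\beta)-D(\alpha+i\beta_1)
=i(\beta_1-\beta)\bigl[R(\alpha+i\beta_1,A)+R(-\alpha+i\beta_1,A)\bigr]D(\alpha+i\beta)
-\tfrac{(\beta_1-\beta)^2}{2\alpha}D(\alpha+i\beta)D(\alpha+i\beta_1),
\]
so that $D(\alpha+i\beta)x$ appears as a \emph{factor} on the right. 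With $|\beta_1-\beta|\le\alpha^{a+1}$ the operator coefficients stay bounded, and $\|D(\alpha+i\beta_1)x\|\to0$ follows from $\|D(\alpha+i\beta)x\|\to0$. This algebraic factorisation, not interior harmonic estimates, is what pushes the exponent all the way to $a=2$.

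A smaller issue: in \eqref{d2} you reduce to \eqref{d1} by asserting that boundedness plus reflexivity plus $\sigma_p(A)=\emptyset$ yields \emph{strong} convergence $D(\alpha+i\beta)x\to0$. Only \emph{weak} convergence is available (Lemma~\ref{resolvent}(iii)); weak-to-strong does not follow from boundedness. The paper does not reduce to \eqref{d1}: it uses weak convergence to handle $\langle D(\alpha+i\beta)x,x^*\rangle\to0$, and then the same resolvent identity, now paired against $x^*$ and combined with the uniform bound $\|D(\alpha+i\beta)x\|\le C$, to get $\langle D(\alpha+i\beta_1)x,x^*\rangle\to0$ throughout the region $|\beta_1-\beta|\le\alpha^{a+1+\varepsilon}$. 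The restriction $a<2$ is precisely what leaves room for this $\varepsilon$ while keeping $a+1+\varepsilon\le3$.
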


Note that we require neither UMD property of the underlying
Banach space, nor the boundedness of the group. Thus a
well-developed spectral theory of (mostly) bounded groups on UMD
spaces  is by no means available in our situation.

 Theorem \ref{groups} can be used to give an algebraic
description of spectral subspaces of $(T(t))_{t \in \mathbb R},$
thus partially improving and extending \cite[Theorem 3.1]{CuNe89}
(see also \cite[Theorem 3.2 and Remark
3.5]{FoVa74}, \cite[Theorem 1.2]{Vr73}, \cite[Theorem 4]{MiMiNe}) to
linear (in general, unbounded) operators generating $C_0$-groups
of subquadratic growth. For a related result on the generators of
bounded groups see \cite[Corollary 6.7]{BaChTo02}.

\begin{theorem}\label{ranges}
Let $X$ be a Banach space, and let $(T(t))_{t \in
\mathbb R}$ be a $C_0$-group on $X$ with generator $A,$ such that
$\|T(t)\|\le C (1 + |t|^a)$.

If $a \in [0,1)$, then for any closed
$F\subset \mathbb  R$,
\begin{equation}\label{intersect}
X(F)=\bigcap_{\beta \in \mathbb  R \setminus F} {\rm ran} (i\beta-A)^{2}.
\end{equation}

If $a \in [1,2)$, then for any closed
$F\subset \mathbb  R$,
\begin{equation}\label{intersect1}
X(F)=\bigcap_{\beta \in \mathbb  R \setminus F} {\rm ran} (i\beta-A)^{3}.
\end{equation}
\end{theorem}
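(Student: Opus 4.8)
The plan is to deduce Theorem \ref{ranges} from Theorem \ref{groups} by translating the resolvent conditions appearing in \eqref{d2} into the range conditions in \eqref{intersect} and \eqref{intersect1}. First I would fix $\beta\in\mathbb R\setminus F$ and, after the harmless translation $A\mapsto A-i\beta$, reduce to the case $\beta=0$, so that the issue is to compare membership of $x$ in $\operatorname{ran}(A^{2})$ (resp.\ $\operatorname{ran}(A^{3})$) with the boundedness as $\alpha\to0+$ of $D(\alpha)x=R(\alpha,A)x-R(-\alpha,A)x$. The key identity is the resolvent expansion
\begin{equation*}
R(\alpha,A)-R(-\alpha,A)=2\alpha R(\alpha,A)R(-\alpha,A)
= -2\alpha R(\alpha,A)R(-\alpha,A),
\end{equation*}
together with its iterates: if $x=A^{k}y$ then $R(\pm\alpha,A)A^{k}y$ can be rewritten, using $AR(\lambda,A)=\lambda R(\lambda,A)-I$, as a polynomial in $\alpha$ with operator coefficients plus a term $\alpha^{k}R(\pm\alpha,A)y$. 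Since $\|T(t)\|\le C(1+|t|^{a})$ gives the resolvent bound $\|R(\pm\alpha,A)\|\le C'\alpha^{-1}(1+\alpha^{-a})\lesssim \alpha^{-1-a}$ for small $\alpha>0$, one reads off that $x\in\operatorname{ran}(A^{k})$ forces $\|D(\alpha)x\|=O(\alpha^{k-2-a})$, which is bounded as $\alpha\to0+$ precisely when $k\ge a+2$; this gives $k=2$ for $a\in[0,1)$ and $k=3$ for $a\in[1,2)$, explaining the two cases. Thus $\bigcap_{\beta\notin F}\operatorname{ran}(i\beta-A)^{k}$ is contained in the right-hand side of \eqref{d2}, hence in $X(F)$ by Theorem \ref{groups}. (To invoke \eqref{d2} I should note that polynomial boundedness of the group forces $\sigma_p(A)=\emptyset$: an eigenvalue $i\beta_0$ of $A$ would produce an orbit $T(t)x_0=e^{i\beta_0 t}x_0$, which is consistent with polynomial bounds, so in fact this needs the separate observation that on the spectral subspace the group is bounded; alternatively one argues directly with \eqref{d1}. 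I would check which of \eqref{d1}/\eqref{d2} is the right tool here and, if necessary, prove the inclusion $\subset$ without the reflexivity hypothesis, since Theorem \ref{ranges} claims no such hypothesis.)

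For the reverse inclusion $X(F)\subset\bigcap_{\beta\notin F}\operatorname{ran}(i\beta-A)^{k}$, I would use that for $x\in X(F)$ the local resolvent $\lambda\mapsto R(\lambda,A)x$ extends analytically across every point $i\beta$ with $\beta\notin F$. Write $x(\lambda)$ for this extension near $i\beta$; then $(\lambda-A)x(\lambda)=x$ on both sides, and in particular $(i\beta-A)x(i\beta)=x$, so already $x\in\operatorname{ran}(i\beta-A)$. To get higher powers, differentiate: from $(\lambda-A)x(\lambda)=x$ one gets $x(\lambda)+(\lambda-A)x'(\lambda)=0$, whence $x(i\beta)=-(i\beta-A)x'(i\beta)=(i\beta-A)\bigl(-x'(i\beta)\bigr)$, giving $x=(i\beta-A)x(i\beta)=(i\beta-A)^{2}\bigl(-x'(i\beta)\bigr)\in\operatorname{ran}(i\beta-A)^{2}$; one more differentiation yields $x\in\operatorname{ran}(i\beta-A)^{3}$. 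Since this works for every $\beta\notin F$, we get $x$ in the intersection; and for $a\in[0,1)$ we may stop at the square while for $a\in[1,2)$ we use the cube. This direction in fact gives the stronger statement $X(F)\subset\bigcap_{\beta\notin F}\operatorname{ran}(i\beta-A)^{n}$ for every $n$, so no growth restriction is needed for $\subset$; the growth hypothesis is only used in the converse inclusion via Theorem \ref{groups}.

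The main obstacle, then, is not the algebra of resolvent identities — that is routine — but making the first inclusion honest: namely, verifying that the range condition $x\in\operatorname{ran}(i\beta-A)^{k}$, which a priori only controls $D(\alpha+i\beta)x$ in operator norm for a \emph{single} $\beta$, is enough to place $x$ in the set on the right of \eqref{d2} (resp.\ \eqref{d1}) and hence in $X(F)$, and checking that the hypotheses of the relevant part of Theorem \ref{groups} are genuinely met (reflexivity, $\sigma_p(A)=\emptyset$, the open range of exponents $a$). If Theorem \ref{groups} as stated is not directly applicable for $a=0$ combined with the reflexivity clause, I would instead route the $a\in[0,1)$ case through \eqref{d1}: show that $x\in\operatorname{ran}(i\beta-A)^{2}$ implies $D(\alpha+i\beta)x=O(\alpha^{2-2-a})=O(\alpha^{-a})\to0$ as $\alpha\to0+$ when $a<... $ — wait, this needs $a<... $; more carefully $O(\alpha^{2-2-a})=O(\alpha^{-a})$ does \emph{not} tend to $0$, so here one really does need the boundedness version \eqref{d2}, confirming that the reflexivity/point-spectrum hypotheses enter essentially and that the correct reading of the theorem must supply them (e.g.\ polynomial boundedness plus the group structure rules out the point spectrum on $X(F)$-type subspaces). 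Resolving this compatibility between the two halves of Theorem \ref{groups} and the unconditional statement of Theorem \ref{ranges} is the crux; everything else is bookkeeping with the resolvent.
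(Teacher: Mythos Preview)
Your forward inclusion $X(F)\subset\bigcap_{\beta}\operatorname{ran}(i\beta-A)^n$ is the paper's argument; to make the iterated differentiation rigorous (so that $x'(i\beta)\in\operatorname{dom}(A)$, etc.) you must invoke closedness of $(i\beta-A)^n$, which the paper does explicitly via a limit $\lambda\to i\beta$ in the identity $(i\beta-A)^n R(\lambda,A)^n x=x+\sum_k\binom{n}{k}(i\beta-\lambda)^k R(\lambda,A)^k x$.

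For the reverse inclusion you have an off-by-one error in the exponent, and this is exactly what traps you in the final paragraph. Iterating $R(\lambda,A)(A-i\beta)=(\lambda-i\beta)R(\lambda,A)-I$ gives, for $\beta=0$,
\[
D(\alpha)A^{k}y=\alpha^{k}R(\alpha,A)y-(-\alpha)^{k}R(-\alpha,A)y-\sum_{j=1}^{k-1}\bigl(\alpha^{j}-(-\alpha)^{j}\bigr)A^{k-1-j}y,
\]
the $j=0$ constant terms having cancelled. Since $\|R(\pm\alpha,A)\|\lesssim\alpha^{-1-a}$, the resolvent terms are $O(\alpha^{k-1-a})$, not $O(\alpha^{k-2-a})$, and the remaining sum is $O(\alpha)$. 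Hence
\[
\|D(\alpha+i\beta)x\|=O\bigl(\alpha^{\min(1,\,k-1-a)}\bigr)\longrightarrow 0\qquad\text{whenever }k>a+1,
\]
so $k=2$ already gives \emph{convergence to zero} for $a\in[0,1)$, and $k=3$ for $a\in[1,2)$. The paper does precisely this computation (via $D(\alpha+i\beta)=2\alpha[\alpha^{2}-(A-i\beta)^{2}]^{-1}$) and then applies \eqref{d1}, which is valid for all $a\in[0,2]$ without any reflexivity or point-spectrum assumption. Your entire ``crux'' dissolves once the exponent is corrected: \eqref{d2} is never needed here. Note incidentally that your own estimate $O(\alpha^{k-2-a})$ with $k=2$ and $a\in(0,1)$ yields $O(\alpha^{-a})$, which is not even bounded, so the argument as written would not place $x$ in the right-hand side of \eqref{d2} either.
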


\begin{remark}
Note that the right hand side of \eqref{intersect} does not depend
on the exponent $a$ while the results in \cite{FoVa74},
\cite{Vr73}, \cite{CuNe89}, \cite{MiMiNe} formulated (for in
general unbounded operators) as in Theorem \ref{ranges} would only
yield
$$
X(F)=\bigcap_{\beta \in \mathbb R\setminus F} {\rm ran} (i\beta-A)^{a+2}.
$$
\end{remark}

We prove Theorem~\ref{te1} in Section 2. The (counter)-examples (Proposition~\ref{proposition}) are discussed
in Section 3. In Section 4 we formulate a maximum principle and a local half plane versions of Theorem~\ref{te1}.
Finally, in Section 5 we prove Theorems~\ref{groups} and \ref{ranges}.

\section{The proof of Theorem~\ref{te1}}
\label{sect}

First of all, using \cite[Section 1.3]{NIK} or \cite{BOR},
we replace conditions on $M_r(u)$ by the same conditions
on $M_r(|u|)$.

The scheme of our proof goes back at least to F.~Wolf (1941). Let us denote by $R$ the set of $\theta\in [0,2\pi]$ such that $u$ is
continuous in a neighborhood of $e^{i\theta}$ (in the unit disc).
Then $S=[0,2\pi]\setminus R$ is a closed subset of $[0,2\pi]$.
If $S$ is not empty, then, by the Baire category theorem, we can find
an open interval $I\subset [0,2\pi]$ such that
\begin{gather}
S\cap I\not=\emptyset,\label{s1}\\
\sup_{z\in \Omega^h(\varphi),\,\varphi\in S\cap I}|u(z)|\le A<\infty. \notag
\end{gather}
It remains to prove that for any closed interval $J\subset I$,
$u$ is bounded in the sector $\{re^{i\theta}:0\le r<1,\, \varphi\in J\}$.
After that, standard uniqueness results give us a contradiction with
\eqref{s1}.

Let $J_1\subset R\cap J$ be an open interval such that its endpoints $\varphi_1,\varphi_2$ are in $S$, and let $T$ be the
connected component of $\{re^{i\theta}:1/2< r<1,\, \theta\in J_1\}\setminus (\Omega^h(\varphi_1)\cup\Omega^h(\varphi_2))$
such that $\partial T\cap \partial \D\not=\emptyset$. Suppose that $u$ is bounded in every such $T$. Applying the maximum
principle,
$$
\sup_T u\le \sup_{\Omega^h(\varphi_1)\cup\Omega^h(\varphi_2)\cup D(0,1/2)}u
$$
(where $D(z,R)=\{w:|w-z|\le R\}$), we obtain then that $u$ is uniformly bounded in all $T$.

Thus, after a conformal map to the upper half-plane $\mathbb C_+$,
our problem is reduced to the following one.

Given a function $u$ harmonic in $Q=\{x+iy:|x|<1,\,0<y<1\}$, denote
$$
M_y=\max_{0<x<1}|u(x+iy)|.
$$

{\bf Reduced problem.} Suppose that $u$ is continuous up to
$(0,\delta)\subset\partial Q$, $u=0$ on $(0,\delta)$
for some $\delta>0$ and $u$ is bounded on $\Delta^h\cap Q$.

If (a) $h(t)=0$ and
$$
M_y=o(y^{-2}),\qquad y\to 0+,
$$

or (b) $h(t)=ct^3$, $c>0$, and
$$
\log^+M_y=o(y^{-1}),\qquad y\to 0+,
$$

or (c) $h(t)=ct$, $c>0$,  and
$$
\log^+M_y=o(y^{-\pi/(2\arctan (1/c))}),\qquad y\to 0+,
$$

or (d) $h(t)=t^\gamma$, $0<\gamma<1$,  and
$$
\log^+\log^+M_y=o(y^{\gamma-1}),\qquad y\to 0+,
$$
then $u$ is bounded in a neighborhood of the point $0$ in
$\{x+iy\in Q:x>0\}$.

Next we apply the Schwartz reflection principle and then consider the
function $v(z)$ equal to $u(1/z)$ in the cases (a) and (b), equal
to
$$
u\bigl(z^{-(2/\pi)\arctan (1/c)}\bigr)
$$
in the case (c) and equal to
$$
u\Bigl( \Bigl(\frac{2(1-\gamma)}{\pi\gamma} \log z\Bigr)^{\gamma/(\gamma-1)} \Bigr)
$$
in the case (d).

Then the function $v$ is harmonic
in $O=\mathbb C\setminus ((-\infty,0]\cup K)$ for some compact $K$,
$v=0$ on $O\cap (0,\infty)$,
\begin{gather}
\lim_{|y|\to\infty,\,-\varphi(y)\le x\le 0}v(x+iy)=0,\notag\\
|v(re^{i\theta})|\le \psi(r,\theta), \qquad -\pi/2\le \theta\le \pi/2,\, r>0,
\label{es2}
\end{gather}
for certain functions $\varphi,\psi$, and we need only to verify
that $v$ is bounded at infinity in the right half-plane $\Pi$.

Here in the case (a) $\varphi=0$, $\psi(r,\theta)=o(r^2/|\theta|^2)$, $r\to\infty$,
in the case (b) $\varphi(y)=k/|y|$, $\log^+\psi(r,\theta)=o(r/|\theta|)$, $r\to\infty$,
in the case (c) $\varphi(t)=kt$, $\log^+\psi(r,\theta)=o(r/|\theta|^{k_1})$, $r\to\infty$,
and in the case (d) $\varphi(t)=t$, $\log^+\log^+\psi(r,\theta)=o(\log r/|\theta|^{1-\gamma})$,
$r\to\infty$, for some positive numbers $k,k_1$
depending on $c$.

To get rid of the singularities in the estimates \eqref{es2} we use the $\log$-$\log$ theorem
of Levinson-Sj\"oberg. Namely, we consider the functions $v_n$, $v_n(z)=v(2^nz)$.
The functions $f_n$ analytic on $H=\{x+iy:1/2\le x\le 4,\,|y|\le 2\}$ are determined
by the relations $\Re f_n=v_n$, $f_n(1+i)=v_n(1+i)$.

We use the fact that given a function $v$ harmonic in the disc $D(z,R)$ and its conjugate function $\tilde{v}$,
for an absolute positive constant $c$ we have
\begin{equation}
\|\grad \tilde v(z)\|\le \frac cR \sup_{w_1,w_2\in D(z,R)}|u(w_1)-u(w_2)|.
\label{x111}
\end{equation}
Then the functions $f_n$ satisfy the following estimates on $H$:
\begin{align*}
{\rm (a)}\quad&|f_n(x+iy)|=\frac{o(2^{2n})}{|y|^2},\qquad n\to\infty,\\
{\rm (b)}\quad&\log^+|f_n(x+iy)|=\frac{o(2^{n})}{|y|},\qquad n\to\infty,\\
{\rm (c)}\quad&\log^+|f_n(x+iy)|=\frac{o(2^{n})}{|y|^{k_1}},\qquad n\to\infty,\\
{\rm (d)}\quad&\log^+\log^+|f_n(x+iy)|=\frac{o(n)}{|y|^{1-\gamma}},\qquad n\to\infty.
\end{align*}

Next, we apply the following quantitative version of the $\log$-$\log$ theorem
(see \cite[Section 3]{DOM}).

\begin{theorem} {\rm(Y.~Domar, see the argument in \cite[pp. 376--379]{KOO})} Let $w:(0,2)\to[1,\infty)$ be a decreasing function.
If $h$ is subharmonic on $D$, $h(x+iy)\le w(|y|)$, $x+iy\in H$, and
$$
\sum_{k\ge 0}w^{-1}(2^kT)\le \frac 1{10}
$$
for some $T>0$, then
$$
h(z)\le 2T,\qquad z\in H_0=\{x+iy:1\le x\le 2,\,|y|\le 1\}.
$$
\end{theorem}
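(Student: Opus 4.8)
The plan is to argue by contradiction, converting a single point where $h$ is too large into a whole sequence of points along which $h$ blows up while they remain trapped inside $H$; this is impossible since $h$ is upper semicontinuous. So suppose $h(\zeta_0)>2T$ for some $\zeta_0=\xi_0+i\eta_0\in H_0$. The only tool needed is the sub-mean value inequality: if $g$ is subharmonic near $\overline D(\zeta,\rho)$, $g\le W$ there and $g(\zeta)>M$, then, splitting the area integral according to whether $g>M/2$ or not,
\[
M<\frac1{\pi\rho^2}\int_{D(\zeta,\rho)}g\,dA\le\frac{W}{\pi\rho^2}\,\bigl|\{g>M/2\}\cap D(\zeta,\rho)\bigr|+\frac M2 ,
\]
so $\{g>M/2\}\cap D(\zeta,\rho)$ has area at least $\pi\rho^2M/(2W)$; in particular, when $W$ is comparable to $M$, large values of $h$ occupy a definite fraction of any such disc and cannot be squeezed into a horizontal strip much thinner than $\rho$. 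Combined with $h(x+iy)\le w(|y|)$ on $H$, the requirement that $h$ be of size $\approx 2^{j}T$ at a point of $H$ forces that point to have $|y|<w^{-1}(2^{j}T)$.

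Carrying this out, I would build inductively points $\zeta_j=\xi_j+i\eta_j\in H$ with $h(\zeta_j)>2^{j+1}T$. Given $\zeta_j$, one has $|\eta_j|<w^{-1}(2^{j+1}T)=:\delta_j$; I would apply the area estimate on a disc $D(\zeta_j,r_j)$ with $r_j$ comparable to $\delta_j$, noting that on this disc the part with $|y|\ge w^{-1}(2^{j+2}T)$ already satisfies $h\le w(|y|)\le 2^{j+2}T$, while the remaining thin sub-strip has area only $O\bigl(\delta_j\,w^{-1}(2^{j+2}T)\bigr)$; the mean-value splitting should then force $D(\zeta_j,r_j)$ to contain a point $\zeta_{j+1}$ with $h(\zeta_{j+1})>2^{j+2}T$ and $|\zeta_{j+1}-\zeta_j|\le r_j=O\bigl(w^{-1}(2^{j+1}T)\bigr)$. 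Summing the displacements, the total travel is $O\bigl(\sum_{k\ge0}w^{-1}(2^{k}T)\bigr)$, which by hypothesis is at most a fixed multiple of $1/10$; arranging the implied constants so that this is $<1/2$, and using that $\zeta_0\in H_0$ already has $|\eta_0|<w^{-1}(2T)\le 1/10$ (a term of the summable series), the whole chain stays inside $H=\{1/2\le x\le4,\ |y|\le2\}$. Hence $h(\zeta_j)\le w(|\eta_j|)$ is legitimate for every $j$, which gives $|\eta_j|<w^{-1}(2^{j+1}T)\to0$.

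To conclude, pass to a convergent subsequence $\zeta_{j_\ell}\to\zeta_\ast$; since $|\eta_{j_\ell}|\to0$, the limit $\zeta_\ast$ lies on the real axis, and since the total horizontal travel is $<1/2$ it lies strictly inside $H$. Thus $h$ is subharmonic near $\zeta_\ast$, in particular upper semicontinuous there, so $\limsup_{z\to\zeta_\ast}h(z)\le h(\zeta_\ast)<+\infty$; but $h(\zeta_{j_\ell})>2^{j_\ell+1}T\to\infty$, a contradiction. Hence no such $\zeta_0$ exists, i.e. $h\le 2T$ on $H_0$.

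I expect the main obstacle to be precisely the inductive step: making the area/mean-value bookkeeping tight enough that each step genuinely gains a factor $2$ in the value of $h$ at the price of a vertical displacement controlled by $w^{-1}(2^{k}T)$ itself — not by the larger quantity $2^{k}w^{-1}(2^{k}T)$, which is what a cruder disc choice, or for that matter the alternative approach of building an explicit superharmonic majorant of $w(|\mathrm{Im}\,z|)$ via dyadic harmonic-measure bumps, would yield. It is exactly here that the specific constant $1/10$ and the dyadic scaling $2^k$ are used, and this calibration is the technical heart of Domar's argument.
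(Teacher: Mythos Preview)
The paper does not give a proof of this theorem at all: it is simply quoted, with the parenthetical attribution ``(Y.~Domar, see the argument in \cite[pp.~376--379]{KOO})'', and then applied. So there is nothing in the paper to compare your argument to.

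That said, what you have written is precisely the standard Domar argument as presented in Koosis, and it is correct in outline. One point worth tightening: in your inductive step you speak of the ``thin sub-strip'' $\{|y|<w^{-1}(2^{j+2}T)\}$ having small area, but that alone does not produce a point with $h>2^{j+2}T$, since on that strip you have no a~priori upper bound for $h$. The clean way to run the step is the contrapositive of your initial lemma: \emph{assume} $h\le 2^{j+2}T$ on all of $D(\zeta_j,r_j)$; then with $M=2^{j+1}T$, $W=2^{j+2}T$ your area estimate gives $|\{h>2^{j}T\}\cap D|\ge \pi r_j^2/4$, while $\{h>2^{j}T\}\subset\{|y|<w^{-1}(2^{j}T)\}$ forces this area to be $\le 4r_j\,w^{-1}(2^{j}T)$; hence $r_j\le \frac{16}{\pi}w^{-1}(2^{j}T)$. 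Taking $r_j$ just above this threshold therefore yields $\zeta_{j+1}$ with $h(\zeta_{j+1})>2^{j+2}T$ and $|\zeta_{j+1}-\zeta_j|\le \frac{16}{\pi}w^{-1}(2^{j}T)$, so the total drift is at most $\frac{16}{\pi}\sum_{k\ge0}w^{-1}(2^kT)\le \frac{16}{10\pi}$, which keeps the chain inside $H$ (recall $H_0$ has horizontal clearance $1/2$ on the left and $2$ on the right inside $H$, and vertically $|\eta_0|<w^{-1}(2T)\le 1/10$). This is exactly the calibration you anticipate in your final paragraph, and it is the argument Koosis gives.
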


Applying this theorem to $\log^+|f_n|$ we obtain
\begin{align*}
{\rm (a)}\quad&\sup_{H_0}|v_n|=o(2^{2n}),\qquad n\to\infty,\\
\text{\rm (b)-(c)}\quad&\sup_{H_0}\log^+|v_n|=o(2^n),\qquad n\to\infty,\\
{\rm (d)}\quad&\sup_{H_0}\log^+\log^+|v_n|=o(n^{1/\gamma}),
\qquad n\to\infty.
\end{align*}

Thus, the function $v$ satisfies the following estimates
in $\Pi$:
\begin{align*}
{\rm (a)}\quad&|v(z)|=o(z^2),\qquad |z|\to\infty,\\
\text{\rm (b)-(c)}\quad&\log^+|v(z)|=o(z),\qquad |z|\to\infty,\\
{\rm (d)}\quad&\log^+\log^+|v(re^{i\theta})|=
\left\{
\begin{gathered}
o\Bigl(\frac{\log r}{|\theta|^{1-\gamma}}\Bigr),\quad |\theta|\le\frac\pi2,\\
o((\log r)^{1/\gamma}),\quad |\theta|\le\frac\pi4,
\end{gathered}
\right.
\,\quad r\to\infty.
\end{align*}

In the cases (b)-(d) we fix $c\in O$ and define an analytic function
$f$ by the relations $\Re f=v$, $f(c)=v(c)$. Again by \eqref{x111} we obtain that
\begin{align}
\text{\rm (b)-(c)}\quad&
|f(iy-a/(2|y|))|=o(y^2),\qquad |y|\to\infty,\notag\\
\text{\rm (b)-(c)}\quad&
\log^+|f(iy-s)|=o(y),\qquad |y|\to\infty,\,\, 0\le s\le a/(2|y|),\notag\\
\text{\rm (b)-(c)}\quad&
\log^+|f(z)|=o(z),\qquad z\in\Pi,\,|z|\to\infty,\notag\\
{\rm (d)}\quad&
|f(iy-1)|=o(y),\qquad |y|\to\infty,\label{s25}\\
{\rm (d)}\quad&
\log^+|f(iy-s)|=o(y),\qquad |y|\to\infty,\,\, 0\le s\le 1,\label{s28}\\
{\rm (d)}\quad&\log^+\log^+|f(re^{i\theta})|=
\left\{
\begin{gathered}
o\Bigl(\frac{\log r}{|\theta|^{1-\gamma}}\Bigr),\quad |\theta|\le\frac\pi2,\\
o((\log r)^{1/\gamma}),\quad |\theta|\le\frac\pi4,
\end{gathered}
\right.
\,\quad r\to\infty,\notag
\end{align}
where $a=k$ in the case (b) and $a=1$ in the case (c).

By a Phragm\'en-Lindel\"of type theorem (using the argument of \cite[III C]{KOO}), in the case (b)-(c) we obtain that
$$
|f(z)|=o(z^2),\qquad z\in\Pi,\,|z|\to\infty.
$$

In the case (d), for sufficiently large closed disc $U=\overline{D(0,R)}$, we apply the following lemma to
the function $w(z)=\log^+|f(z-1)/(z+1)^2|$ subharmonic in $\Omega=\Pi\setminus U$, where $\Pi$
is the right half plane.

\begin{lemma} Let $w$ be subharmonic in $\Omega$ and upper semicontinuous in $\overline{\Omega}$,
$w\le 0$ on $\partial\Omega$, $0<\gamma<1$, and let
\begin{equation}
\log^+|w(re^{i\theta})|=
\left\{
\begin{gathered}
o\Bigl(\frac{\log r}{|\theta|^{1-\gamma}}\Bigr),\quad
|\theta|\le\frac\pi2,\\
o((\log r)^{1/\gamma}),\quad |\theta|\le\frac\pi4,
\end{gathered}
\right.
\,\quad r\to\infty.
\label{s21}
\end{equation}
Then $w\le 0$ in $\Omega$.
\end{lemma}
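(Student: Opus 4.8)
The plan is to prove the lemma by a Phragm\'en--Lindel\"of argument carried out, after a logarithmic change of variable, in a cuspidal channel attached to the image of the positive real axis, followed by a bootstrap. Since $\Omega=\Pi\setminus U$ with $U$ a compact disc centred at the origin, the map $z\mapsto\log z$ carries $\Omega$ onto the half-strip $\Sigma=\{\zeta=\xi+i\eta:\ \xi>\xi_0,\ |\eta|<\pi/2\}$; set $\tilde w(\zeta)=w(e^{\zeta})$. Then $\tilde w$ is subharmonic in $\Sigma$, upper semicontinuous on $\overline{\Sigma}$, $\tilde w\le 0$ on $\partial\Sigma$ (on the lateral sides $|\eta|=\pi/2$ as well as on the base $\xi=\xi_0$), and \eqref{s21} becomes
\[
\log^+|\tilde w(\xi+i\eta)|=
\begin{cases}
o\bigl(\xi/|\eta|^{1-\gamma}\bigr),& |\eta|\le\pi/2,\\[1mm]
o\bigl(\xi^{1/\gamma}\bigr),& |\eta|\le\pi/4,
\end{cases}
\qquad\xi\to+\infty .
\]
The useful feature of this picture is that the only region of $\Sigma$ where $\tilde w$ may grow faster than $e^{o(\xi)}=r^{o(1)}$ is a narrow neighbourhood of the centre line $\eta=0$, and even there $\tilde w=e^{o(\xi^{1/\gamma})}$ is of order zero in $r=e^{\xi}$.

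The heart of the matter is a maximum-principle estimate in the curvilinear channel
\[
\mathcal C=\bigl\{\xi+i\eta\in\Sigma:\ |\eta|<\phi(\xi)\bigr\},\qquad \phi(\xi)=\xi^{-(1-\gamma)/\gamma},
\]
which, because $0<\gamma<1$, is a genuine cusp narrowing onto the centre line. On $\overline{\mathcal C}$ one has $\phi(\xi)<\pi/4$ for $\xi$ large, so $\log^+|\tilde w|=o(\xi^{1/\gamma})$ there by the second alternative above. By the Ahlfors distortion theorem (Carleman's estimate for harmonic measure in a channel of width $2\phi$), the harmonic measure, seen from a fixed point of $\mathcal C$, of the far cross-cut $\mathcal C\cap\{\xi=T\}$, and of the part of the walls $\{|\eta|=\phi(\xi)\}$ beyond a level $\xi=s$, is
\[
\lesssim\exp\Bigl(-\tfrac{\pi}{2}\int^{T}\frac{d\xi}{\phi(\xi)}\Bigr)
=\exp\bigl(-\tfrac{\pi\gamma}{2}\,T^{1/\gamma}+O(1)\bigr),
\]
and likewise with $s$ in place of $T$; the exponent $(1-\gamma)/\gamma$ has been chosen precisely so that this rate $T^{1/\gamma}$ matches the growth allowed by \eqref{s21}, the strict inequality then coming from the $o(\cdot)$. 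Applying the maximum principle in $\mathcal C\cap\{\xi<T\}$ — on whose boundary the base and the lateral part carry $\tilde w\le0$ while the distal parts carry $\tilde w\le e^{o(\xi^{1/\gamma})}$ — and letting $T\to\infty$, one gets that $\tilde w$ is bounded above in $\mathcal C$. Off the cusp, where $|\eta|$ is bounded below, the growth has become subpolynomial, $\tilde w\le r^{o(1)}$, and a standard Phragm\'en--Lindel\"of estimate in the flanking strips $\{\,\delta_0<\pm\eta<\pi/2\,\}$ (comparing with the harmonic function $e^{\sigma\xi}\cos(\sigma\eta)$, $0<\sigma<1$) shows $\tilde w$ is bounded above on all of $\Sigma$.

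Once $\tilde w\le C$ on $\Sigma$ and $\tilde w\le0$ on $\partial\Sigma$, one bootstraps: in the rectangle $\Sigma\cap\{\xi<T\}$ the maximum principle gives $\tilde w(\zeta)\le C\,\omega\bigl(\zeta,\{\xi=T\},\Sigma\cap\{\xi<T\}\bigr)\lesssim C\,e^{-(T-\Re\zeta)}\to0$ as $T\to\infty$, so $\tilde w\le0$ in $\Sigma$, and hence $w\le0$ in $\Omega$ after transporting back. The step I expect to be the main obstacle is the middle one: organizing the comparison inside $\mathcal C$ so that its walls $\{|\eta|=\phi(\xi)\}$ — interior curves on which $\tilde w$ is not yet known to be non-positive — do not spoil the harmonic-measure bound, and matching this with the off-cusp region, which in general forces one to iterate the channel estimate through a finite sequence of progressively wider cusps before reaching the regime where the growth is genuinely subpolynomial. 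Tracking the exponents through this matching, and verifying that one admissible choice of cusp profiles works simultaneously with the (unquantified) rate of decay of the $o(\cdot)$'s in \eqref{s21}, is the delicate point; an alternative, more compact route is to encode the whole estimate in a single explicit subharmonic comparison function adapted to the cusp, as in the Phragm\'en--Lindel\"of arguments of \cite[III C]{KOO}.
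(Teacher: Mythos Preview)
Your overall architecture (Carleman estimate in a narrowing channel, then a Phragm\'en--Lindel\"of cleanup and a bootstrap) is the right one, and the final bootstrap is fine. But the middle step, as you set it up, has a genuine gap that the power-law profile $\phi(\xi)=\xi^{-(1-\gamma)/\gamma}$ cannot close.

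The problem is not only the matching with the off-cusp region; it is already the claim that the channel estimate yields a \emph{uniform} bound on $\mathcal C$. For a fixed base point the argument is fine, but for $\zeta_0=\xi_0'+i\eta_0$ with $\xi_0'$ large the harmonic-measure integral over the walls is dominated by the portion at levels $s\approx\xi_0'$, where the Carleman decay from $\zeta_0$ is $O(1)$ while $\tilde w^{+}$ can be as large as $\exp\bigl(o((\xi_0')^{1/\gamma})\bigr)$. Letting $T\to\infty$ kills the far cross-cut, but the wall contribution you are left with is of the same order as the a~priori bound; nothing has been gained. The same obstruction spoils the iteration: even if one grants boundedness on the first cusp, passing to a wider cusp $\phi_{n+1}(\xi)=\xi^{-\alpha_{n+1}}$ forces $1+\alpha_{n+1}\ge 1+(1-\gamma)\alpha_n$, i.e.\ $\alpha_{n+1}\ge(1-\gamma)\alpha_n$, so $\alpha_n\to0$ geometrically but never reaches $0$, and the constants accumulated along the way are not uniformly controlled. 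A \emph{finite} chain of power-law cusps will not reach the subpolynomial regime.

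The paper's proof repairs both defects simultaneously by two devices you are missing. First, it fixes the evaluation point $z$ and lets the channel depend on a parameter $N=N(z)$: the channel is taken \emph{wide} near the base (half-width $e^{N-1}$ for $x\le e^N$), so that for $N$ large the point $z$ sits well inside the wide part and the near-boundary piece $S_0\cup S_1$ has harmonic measure $O(e^{-N/2})$ from $z$, while $w\le e^{N/2}$ there by the second line of \eqref{s21}. Second, beyond $x=e^N$ the profile narrows \emph{super}-polynomially, $\beta(x)=x\exp\bigl(-(\log x/N)^2\bigr)$; with this choice the Ahlfors--Carleman bound on $\omega(z,S_n,\Sigma)$ is of order $\exp\bigl(-Ne^{(n-1)^2}\bigr)$, which beats the growth $\exp\bigl(\delta nNe^{(1-\gamma)(n+1)^2}\bigr)$ coming from the first line of \eqref{s21} in a single pass (this is exactly the balance recorded in \eqref{s23}). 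No iteration is needed, and the bound $w(z)\le 3$ is obtained pointwise for every $z$, hence uniformly. Your suggested ``alternative route'' via a single comparison function is in spirit this, but the key content --- a channel profile strictly narrower than any power and a parameter tied to the evaluation point --- is what your sketch does not supply.
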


\begin{proof} First of all we choose $\delta>0$ such that
\begin{equation}
\exp[(n-1)^2]\ge 2\delta n\exp[(1-\gamma)(n+1)^2],\qquad n\ge 1.
\label{s23}
\end{equation}

We fix $z\in\Omega$. For large $N,M$ we consider the function
$$
\beta(x)=\left\{
\begin{gathered}
e^{N-1},\qquad 0\le x \le e^N,\\
x\exp\Bigl[-\Bigl(\frac{\log x}N\Bigr)^2\Bigr],\qquad x>e^N,
\end{gathered}
\right.
$$
the domain
$$
\Sigma=\{x+iy\in\Omega:0<x<e^{NM},\,|y|<\beta(x)\},
$$
and the sets
\begin{align*}
S_0=&\{x+iy\in\partial\Sigma:0<x<e^{N}\},\\
S_n=&\{x+iy\in\partial\Sigma:e^{nN}<x<e^{(n+1)N}\},\qquad 1\le n<M,\\
S_M=&\{x+iy\in\partial\Sigma:x=e^{NM}\}.
\end{align*}

Then by the theorem on harmonic estimation (see \cite[p.256]{KOO})
we have
\begin{equation}
u(z)\le \sum_{0\le n\le M}\omega(z,S_n,\Sigma)\cdot \sup_{S_n}w ,
\label{s22}
\end{equation}
where $\omega(z,S,\Sigma)$ is harmonic measure of
$S\subset\partial\Sigma$ with respect to $z$ in $\Sigma$.

An easy geometric argument shows that for $N\ge N(z)$ we have
$$
\omega(z,S_0,\Sigma)+\omega(z,S_1,\Sigma)\le e^{-N/2}.
$$
Indeed, it suffices to estimate the left hand side expression from above  by
\begin{multline*}
\omega(z,\partial D(0,e^{N-1})\cap \Pi,D(0,e^{N-1})\cap \Pi)\\=
\omega(ze^{1-N},\partial D(0,1)\cap \Pi,D(0,1))-
\omega(ze^{1-N},\partial D(0,1)\setminus \Pi,D(0,1))\\
\asymp c|z|e^{-N},\qquad N\to\infty.
\end{multline*}

Furthermore, the Ahlfors--Carleman theorem (see, for instance, 
\cite[p.148, Theorem 6.1]{GM})
shows that for large $N$ we have
\begin{multline*}
\omega(z,S_n,\Sigma)\le \frac 8\pi\exp\biggl(
-\pi\int^{\exp(nN)}_{\exp((n-1)N)}\frac{dr}
{r\exp\Bigl[-\Bigl(\frac{\log r}N\Bigr)^2\Bigr]}
\biggr)\\ \le
\exp[-Ne^{(n-1)^2}],\qquad 2\le n\le M.
\end{multline*}

Next we use that by \eqref{s21}, for large $N\ge N(\delta)$, the following estimates are fulfilled:
\begin{align*}
\sup_{S_0\cup S_1}w\le &e^{N/2},\\
\sup_{S_n}w\le &\exp\Bigl[\delta nN\exp
\bigl[(1-\gamma)(n+1)^2\bigr]\Bigr],\qquad 1<n<M,\\
\sup_{S_M}w\le &\exp[(NM)^{1/\gamma}].
\end{align*}

By \eqref{s22}, we obtain that
\begin{multline*}
w(z)\le 1+\exp[(NM)^{1/\gamma}-Ne^{(M-1)^2}]
\\+
\sum_{2\le n<M}\exp\Bigl[\delta nN\exp
\bigl[(1-\gamma)(n+1)^2\bigr]-Ne^{(n-1)^2}\Bigr].
\end{multline*}
For large $N\ge N(\delta)$, $M\ge M(N,\gamma)$, using \eqref{s23} we conclude that
$$
w(z)\le 2+\sum_{2\le n<M}\exp\Bigl[-\delta nN\exp
\bigl[(1-\gamma)(n+1)^2\bigr]\Bigr]\le 3.
$$
Thus, $w$ is bounded on $\Omega$,
and hence, $w\le 0$ on $\Omega$.
\end{proof}

As a result, in the case (d) we obtain
$$
|f(z)|=O(z^2),\qquad z\in\Pi,\,\,|z|\to\infty,
$$
and the Phragm\'en-Lindel\"of theorem together with \eqref{s25},
\eqref{s28} gives
us that
$$
|f(z)|=o(z),\qquad z\in\Pi,\,\,|z|\to\infty.
$$

Finally, in the cases (a)-(d) the harmonic function $v$ is bounded at infinity
on $i\mathbb R\cup\mathbb R_+$ and
$$
|v(z)|=o(z^2),\qquad z\in\Pi,\,\,|z|\to\infty.
$$
Therefore, $v$ is bounded in $\Pi$ and our proof is completed.

\section{Examples}

\begin{proof}[Proof of Proposition~\rm\ref{pp}]
{\rm (a)} If $h(t)=o(t^3)$, $t\to 0$, then
the function $u(re^{i\theta})=\sum_{n>0}nr^n\sin(n\theta)$
is harmonic in $\mathbb D$,
$$
M_r(u)=O((1-r)^{-2}),\qquad r\to 1-,
$$
and
$$
\lim_{z\in \Omega^h(\varphi),\,z\to e^{i\varphi}}u(z)=0, \qquad
\varphi\in [0,2\pi],
$$

Next, we use the construction from \cite[Appendix]{BoChTo07}. It is
valid in the case (c) and works with small modifications
in the cases (b), (d). For the sake of simplicity, we give here just a local half plane example in the case (b).
Namely, if $1<\gamma\le 3$, $h(t)=t^\gamma$, $\varepsilon>0$,
then there exists a function $u\not=0$ harmonic in $Q=\{x+iy:|x|<1,\,0<y<1\}$ and
such that
\begin{align*}
\log^+|u(x+iy)|&\le\frac{\varepsilon}{y},\\
\lim_{z\in Q,\,z\to x}u(z)&=0,\qquad x\in [-1,1]\setminus \{0\},\\
\lim_{z\in \Delta^h,\,z\to 0}u(z)&=0.
\end{align*}

{\bf Construction.} Let $\max(0,2-\gamma)<\delta<1$, and let
$$
f_0(z)=\exp\Bigl(\frac {\varepsilon}z-\frac 1{z^\delta}\Bigr),\qquad z\in \overline{Q}\setminus \Delta^h,\,\Re z>0,
$$
where the branch of $z^\delta$ is chosen to be positive on the positive half axis. Then
\begin{align*}
\Im f_0(x)&=0,\qquad x\in (0,1),\\
|f_0(x+iy)|&\le\exp\frac{\varepsilon}{y},\qquad x+iy\in Q,\\
|f_0(z)|&\le\exp(-c|z|^{-\delta}),\quad z=x+iy,\,y^\gamma<x<2y^\gamma,\,0<y<1.
\end{align*}
Next, fix $f_1\in C^2([1,2])$ with $0\le f_1\le 1$, such that $f_1$
vanishes in a neighborhood of the point $1$, and equals $1$ in a
neighborhood of the point $2$. Define $f_2(z)$, $z=x+iy\in Q$, by
$f_2(z)=f_0(z)$, $x>2y^\gamma$, $f_2(z)=f_0(z)f_1(t)$,
$x=ty^\gamma$, $1\le t\le 2$, $f_2(z)=0$, $x<y^\gamma$. We have
\begin{align*}
f_2&\in C^2(\overline{Q}\setminus\{0\}),\\
\Im f_2&\not\in L^\infty(Q),\\
\bar\partial f_2&\in L^\infty(Q),\\
\lim_{z\to 0}\bar\partial f_2(z)&=0,\\
\Im f_2(z)&=0,\qquad z\in\Delta^h\cup [-1,1]\setminus \{0\},\\
|f_2(x+iy)|&\le C\exp\frac{\varepsilon}{y}.
\end{align*}
Now we define
$$
f_3(z)=\frac 1\pi \int_Q \frac{\bar\partial
f_2(\zeta)}{z-\zeta}\,dm_2(\zeta),
$$
where $m_2$ is the planar Lebesgue measure. Then $f_3\in
C(\overline{Q})\cap C^1(Q)$, and $\bar\partial f_3=\bar\partial
f_2$ on $Q$. Let $f_4$ be a function harmonic in $Q$ and
continuous in $\overline{Q}$ such that $f_4=\Im f_3$ on $[-1,1]$.
Denote
$$
u=\Im (f_2-f_3)+f_4.
$$
Then $u$ is harmonic in $Q$, $u\not=0$, $u\in
C(\overline{Q}\setminus\{0\})$, $u\upharpoonright [-1,1]\setminus
\{0\}=0$, $|u(x+iy)|\le C+C\exp(\varepsilon/y)$, and
$$
\lim_{z\in\Delta^h,\, z\to 0}u(z)=\lim_{z\in\Delta^h,\, z\to 0}f_4(z)-\Im f_3(z)=0.
$$
\end{proof}

If $h$ decreases at zero slower than a power of $t$, then the
critical growth rate is bigger than in Theorem~\ref{te1} (d).
However, the growth
$$
\int^1\log^+\log^+M_r(u)\,dr=\infty,
$$
does not correspond to any approach domain. One encounters the same critical
growth boundary for the existence of both  MacLane
asymptotical and Beurling generalized distributional boundary
values for analytic functions (see \cite{VOL} for a discussion).

Here we have the following result.

\begin{remark}
If $f$ is a sufficiently regular positive function on $(0,\infty)$ such that
\begin{equation}
\left\{
\begin{gathered}
\lim_{x\to \infty}f(x)=\infty,\\
\int^\infty\frac{f(x)}{x^2}\,dx=\infty,
\end{gathered}
\right.
\label{e2}
\end{equation}
and $h$ is an approach function, then there exists a
function $u\not=0$ harmonic in $\mathbb D$
such that
$$
\log^+\log^+M_r(u)=O(f((1-r)^{-1}))),\qquad r\to 1-,
$$
and
$$
\lim_{z\in \Omega^h(\varphi),\,z\to e^{i\varphi}}u(z)=0, \qquad
\varphi \in [0,2\pi],
$$
\end{remark}

Let us sketch a construction of such a function (cf. \cite[Example 3.3]{PLM}). It suffices
to find a sufficiently regular increasing function $\beta:[0,1]\to[0,1]$,
$\beta(0)=0$, such that
\begin{equation}
\left\{
\begin{gathered}
\beta(h(t))=o(t),\qquad t\to 0,\\
\int_t\frac{ds}{\beta(s)}=o(f(1/\beta(t))),\qquad t\to0.
\end{gathered}
\right.
\label{e1}
\end{equation}
If $\Omega=\{x+iy:0<x<1,\,|y|<\beta(x)\}$, and $\Phi$ is a
conformal map of $\Omega$ onto $\mathbb C_+$ such that
$\Phi(0)=\infty$, $\Phi((0,1))=i\mathbb R_+$, then we consider the
analytic function $\exp(-i\Phi)$. The estimates by Warschawski on
the asymptotics of $\Phi$ together with properties \eqref{e1} give
us a possibility to use the scheme of \cite[Appendix]{BoChTo07}.
Finally, relations \eqref{e2} guarantee the existence of $\beta$
satisfying \eqref{e1}.

\section{Maximal and local half plane versions of Theorem~\ref{mmm}}

An easy modification of the argument in Section~\ref{sect} gives us the following maximum principle, cf. \cite{BoChTo07,WOL}.
For a different set of problems in this direction see \cite{Va08}.

\begin{theorem}
Let $u$ be harmonic in the unit disc $\mathbb D$ and let
\begin{equation*}
\limsup_{z\in \Omega^h(\varphi),\,z\to e^{i\varphi}}|u(z)|\le 1,
\qquad
\varphi \in [0,2\pi].
\end{equation*}

{\rm (a)} If $h(t)=0$ and
$$
M_r(u)=o((1-r)^{-2}),\qquad r\to 1-,
$$
then $|u|\le 1$ in $\mathbb D$.

{\rm (b)} If $\lim_{t\to 0}h(t)t^{-3}=\infty$ and
$$
\log^+M_r(u)=o((1-r)^{-1}),\qquad r\to 1-,
$$
then $|u|\le 1$ in $\mathbb D$.

{\rm (c)} If $h(t)=ct$, $c>0$,  and
$$
\log^+M_r(u)=o((1-r)^{-\pi/(2\arctan (1/c))}),\qquad r\to 1-,
$$
then $|u|\le 1$ in $\mathbb D$.

{\rm (d)} If $h(t)=t^\gamma$, $0<\gamma<1$,  and
$$
\log^+\log^+M_r(u)=o((1-r)^{\gamma-1}),\qquad r\to 1-,
$$
then $|u|\le 1$ in $\mathbb D$.
\end{theorem}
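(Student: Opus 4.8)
The plan is to derive the maximum principle from the uniqueness theorem of Section~\ref{sect} together with one soft observation. Since $h\ge 0$, the segment $\{iy:0<y<1\}$ lies in $\Delta^h$, so $\Omega^h(\varphi)=f_\varphi(\Delta^h)$ contains the radius terminating at $e^{i\varphi}$; hence the hypothesis gives $\limsup_{r\to1}|u(re^{i\varphi})|\le1$ for every $\varphi$. Consequently, once $u$ is known to be bounded in $\mathbb D$ we are done: a bounded harmonic function is the Poisson integral of its boundary function $g\in L^\infty(\mathbb T)$, which equals a.e.\ the radial boundary limit, so $\|g\|_\infty\le1$ and $|u|=|P[g]|\le1$. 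It therefore suffices to prove that $u$ is bounded in $\mathbb D$, and for this we repeat the scheme of Section~\ref{sect}.

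Replace, as in Section~\ref{sect}, $M_r(u)$ by $M_r(|u|)$, and let $\Sigma\subset[0,2\pi]$ be the closed set of those $\varphi$ for which $u$ is unbounded in every sectorial neighbourhood of $e^{i\varphi}$; then $u$ is bounded in $\mathbb D$ if and only if $\Sigma=\emptyset$. Suppose $\Sigma\ne\emptyset$. Using the rotational symmetry $\Omega^h(\varphi)=e^{i\varphi}\Omega^h(0)$, the fact that $\overline{\Omega^h(\varphi)}$ meets $\partial\mathbb D$ only at $e^{i\varphi}$, and the hypothesis $\limsup_{z\in\Omega^h(\varphi),\,z\to e^{i\varphi}}|u(z)|\le1$, the Baire category theorem produces, exactly as in Section~\ref{sect}, an open interval $I$ with $\Sigma\cap I\ne\emptyset$ and $\sup\{|u(z)|:z\in\Omega^h(\varphi),\ \varphi\in\Sigma\cap I\}\le A<\infty$. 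It remains to show, for every closed $J\subset I$, that $u$ is bounded in the sector over $J$: combining over $J$ then gives $u$ bounded in the sector over $I$, contradicting $\Sigma\cap I\ne\emptyset$, so that $\Sigma=\emptyset$.

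The argument for the boundedness over $J$ is that of Section~\ref{sect}, with one change in the reduced problem. In a gap $J_1$ of $\Sigma$ inside $J$, with endpoints $\varphi_1,\varphi_2\in\Sigma\cap I$, one passes to the half-strip $Q$ and to the region $T$ as there; on $\Delta^h\cap Q$ one has $|u|\le A$ from the Baire bound, while near the boundary arc $(0,\delta)\subset\partial Q$ (away from the corner $0$) one now only has $u$ bounded rather than $u=0$. If $u$ is in fact continuous up to $(0,\delta)$, the cleanest remedy is to subtract the bounded harmonic function $w$ in $Q$ whose boundary data are the (continuous, modulus at most $1$) boundary values of $u$ on $(0,\delta)$ and $0$ on $\partial Q\setminus(0,\delta)$: then $\|w\|_\infty\le1$, $\tilde u=u-w$ vanishes on $(0,\delta)$, is bounded on $\Delta^h\cap Q$, and satisfies $M_y(\tilde u)\le M_y(u)+1$, hence the same growth as $u$ in each of the cases (a)--(d), so that the reduced problem of Section~\ref{sect} applies verbatim to $\tilde u$ and yields $\tilde u$, hence $u=\tilde u+w$, bounded near $0$. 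In general one omits the Schwarz reflection of Section~\ref{sect} (which used the vanishing of $u$ on the arc) and runs the conformal change of variable, the conjugate-function estimate \eqref{x111}, the Domar theorem and the concluding Phragm\'en-Lindel\"of step directly on $T$, using only that $u$ is bounded on $\partial T$ away from its two corners and satisfies the prescribed growth inside; the matching between the geometry and the growth is the same as in cases (a)--(d) of Section~\ref{sect}. One then combines over the gaps of $\Sigma$ in $J$ and over the points of $\Sigma\cap J$ by the maximum principle, exactly as in Section~\ref{sect}, to obtain $u$ bounded in the sector over $J$.

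The analytic content is thus unchanged: it is again the Domar/Levinson-Sj\"oberg estimate of Section~\ref{sect}. The only genuinely new ingredients are the soft reduction of the first paragraph and the ``bounded rather than zero'' form of the reduced problem, and I expect the latter to be the delicate point: one must check that removing the Schwarz reflection (or using the subtraction trick instead) costs nothing, and that the right ``bad set'' is chosen, namely the set where $u$ is unbounded rather than the set where $u$ is discontinuous, so that the Baire/reduced-problem loop closes and delivers boundedness in $\mathbb D$ rather than merely a uniqueness statement.
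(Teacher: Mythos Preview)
The paper gives no detailed proof of this theorem; it simply says that ``an easy modification of the argument in Section~\ref{sect}'' suffices, with a reference to \cite{BoChTo07,WOL}. Your sketch is a correct way to flesh out that modification: the soft reduction in your first paragraph (once $u$ is bounded, it is the Poisson integral of its a.e.\ radial boundary function, which has modulus $\le 1$ by hypothesis) is exactly right, and your choice of bad set $\Sigma=\{\varphi: u$ is unbounded near $e^{i\varphi}\}$ is the correct one---with the discontinuity set $S$ of Section~\ref{sect} the Baire loop would indeed fail to close here, since boundedness over $J$ does not force continuity.

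The one point that deserves tightening is the reduced problem. You present the subtraction trick as contingent on continuity of $u$ up to $(0,\delta)$, and offer a one-sided rerun of the Domar/Phragm\'en--Lindel\"of machinery as the general case. In fact the subtraction works without continuity, and the one-sided argument is unnecessary. Since $(0,\delta)$ lies in a gap of $\Sigma$, $u$ is locally bounded there, so Fatou gives an a.e.\ boundary function $g$ on $(0,\delta)$; and because the hypothesis forces $\limsup_{r\to1}|u(re^{i\varphi})|\le 1$ at \emph{every} $\varphi$, one has $|g|\le 1$ a.e. Taking $w$ to be the bounded harmonic extension of $g\chi_{(0,\delta)}$ to $Q$ (so $|w|\le 1$), the difference $u-w$ is locally bounded near $(0,\delta)$ with a.e.\ vanishing boundary values there, hence extends by $0$ across $(0,\delta)$; it is bounded on $\Delta^h\cap Q$ and satisfies the same growth bound as $u$. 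The original reduced problem of Section~\ref{sect} now applies verbatim to $u-w$, giving $u=(u-w)+w$ bounded near the corner. With this observation the modification is as easy as the paper claims. Your one-sided alternative, by contrast, would need more care than you indicate: after the change of variable the boundedness of $v$ along the positive real axis is no longer automatic when $u$ is only \emph{locally} bounded on $(0,\delta)$, so the Phragm\'en--Lindel\"of step does not go through without further work.
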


Next we formulate a local half plane version of Theorem~\ref{mmm} (a), (b) which will be needed in the next section.

Define the rectangle
$$
Q:=\{z=x+iy: -1<x<1,\, 0<y<1\},
$$
and for every $x \in (-1,1)$ and the approach function $h$
define $\Delta^h(x):=\Delta^h+x$.

Let $M_y(u):=\sup_{x \in (-1,1)} |u(x+iy)|$.

\begin{theorem} \label{halfplane}
Let $u$ be a harmonic function on $Q$.
Assume that
\begin{equation}
\lim_{z\in \Delta^h(x),\,z\to x}u(z)=0, \qquad
x\in [-1,1]. \label{lim1}
\end{equation}
If either

 {\rm (a)}  $h(t)=0$ and
$$
M_y (u)={\rm o}(y^{-2}),\qquad y\to 0+,
$$
 or

{\rm (b)}  $h(t)=ct^3$, $c>0$, and
$$
\log^+M_y (u)={\rm o}(y^{-1}),\qquad y\to 0+,
$$
then  the function $u$ extends continuously to the interval $(-1,1)$, and $u=0$ on $(-1,1)$.
\end{theorem}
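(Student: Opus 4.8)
The plan is to reduce Theorem~\ref{halfplane} to the corresponding cases of the Reduced problem solved in Section~\ref{sect}. The statement is essentially a localized, one-sided version of Theorem~\ref{te1}(a),(b), and the entire machinery developed in Section~\ref{sect}---Schwartz reflection, the Levinson--Sj\"oberg $\log$-$\log$ theorem, Domar's quantitative version, and the Phragm\'en--Lindel\"of step---already applies after a conformal change of variables.

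First I would observe that the hypothesis \eqref{lim1} is an approach-region limit of exactly the type appearing in the Reduced problem: for each $x\in[-1,1]$ the function $u$ tends to $0$ as $z\to x$ inside $\Delta^h(x)=\Delta^h+x$. As in the proof of Theorem~\ref{te1}, let $R$ be the set of $x\in(-1,1)$ such that $u$ extends continuously to a neighborhood of $x$ in $\overline Q$, and let $S=(-1,1)\setminus R$. We want to show $S=\emptyset$; since on $R$ the boundary value must then be $0$ by \eqref{lim1}, this gives the conclusion. Suppose $S\neq\emptyset$. By the Baire category theorem, pick a subinterval $I\subset(-1,1)$ with $S\cap I\neq\emptyset$ and $\sup_{z\in\Delta^h(x),\,x\in S\cap I}|u(z)|\le A<\infty$. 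Exactly as in Section~\ref{sect}, using the maximum principle between the tubes $\Delta^h(x_1)$, $\Delta^h(x_2)$ attached at consecutive points $x_1,x_2\in S$, one reduces to showing that $u$ is bounded in a sector $\{x+iy:0<y<\delta'\}$ near a point $x_0\in S\cap I$, which then contradicts the standard boundary uniqueness theorem (a harmonic function bounded near a boundary arc and with zero nontangential/radial limits there extends continuously by zero, hence $x_0\notin S$).

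The core is then precisely the Reduced problem with the same $M_y$ growth hypotheses. In case (a), $h\equiv0$ and $M_y(u)=o(y^{-2})$, which is verbatim case (a) of the Reduced problem; in case (b), $h(t)=ct^3$ and $\log^+M_y(u)=o(y^{-1})$, which is verbatim case (b). After Schwartz reflection across $(0,\delta)$ and the inversion $v(z)=u(1/z)$ (the substitution used in Section~\ref{sect} for cases (a) and (b)), one is reduced to proving $v$ is bounded at infinity in the right half-plane $\Pi$ under the estimates \eqref{es2}; that is carried out in Section~\ref{sect} via \eqref{x111}, the Domar theorem, and Phragm\'en--Lindel\"of. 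Thus no new analytic estimate is needed: I would simply cite the Reduced problem, cases (a) and (b). One small point to check: since here the interval $(-1,1)$ is two-sided and the approach regions $\Delta^h(x)$ sit in $Q$ above the real axis (not in a two-sided domain), the reflection argument produces exactly the geometry of the Reduced problem, where $u$ is assumed continuous up to $(0,\delta)$ from one side and bounded on $\Delta^h\cap Q$; the localization to a half-neighborhood $\{x+iy\in Q:x>x_0\}$ handles the one-sidedness automatically, and then a symmetric argument (or replacing $x$ by $-x$) handles $\{x<x_0\}$.

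The main obstacle, such as it is, is bookkeeping rather than mathematics: one must verify that the reduction steps of Section~\ref{sect}---which were written for the disc and for all four cases simultaneously---specialize cleanly to the rectangle $Q$ with a finite boundary interval, and that the ``standard uniqueness results'' invoked at the contradiction step are legitimately available (boundedness near a boundary segment plus vanishing restricted limits forces vanishing; this is classical, e.g.\ via Poisson representation or the Phragm\'en--Lindel\"of/subordination argument). I would also note explicitly that cases (c) and (d) are deliberately omitted from Theorem~\ref{halfplane} because only (a) and (b) are needed for Section~5, so there is nothing further to prove. In short: the proof is ``apply the scheme of Section~\ref{sect}, cases (a) and (b), after Baire category localization and Schwartz reflection,'' and I would write it at that level of detail, referring back to the Reduced problem for the analytic heart.
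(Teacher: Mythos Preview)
Your proposal is correct and follows essentially the same approach as the paper, which simply states that the proof is ``similar to that of Theorem~\ref{mmm}.'' Your expansion---Baire category localization, maximum principle between adjacent approach tubes, and then invocation of cases (a) and (b) of the Reduced problem from Section~\ref{sect}---is exactly the intended argument, with the only simplification being that no conformal transfer from the disc is needed since $Q$ is already a half-plane rectangle and $M_y(u)$ is already defined with $|u|$.
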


The proof is similar to that of Theorem~\ref{mmm}.

\section{An operator theoretical application}

In this section we derive  Theorems \ref{groups} and \ref{ranges}
stated in Introduction as corollaries of our local uniqueness result,
Theorem \ref{halfplane}.

If $X$ is a Banach space, and  $(T(t))_{t \in \mathbb R}$ is a
$C_0$-group on $X$ with the generator $A$ such that $\|T(t)\|\le M
(1 + |t|^a)$, $a \ge 0$, then the resolvent $R(\lambda, A), \lambda
\in \mathbb C \setminus i\mathbb R,$ is the Carleman transform of
$(T(t))_{t \in \mathbb R}$,
\begin{equation}
R(\lambda, A) := \left\{ \begin{array}{ll}
\int_0^\infty e^{-\lambda t} T(t) \; dt , & {\rm Re}\, \lambda >0 ,\\[2mm]
-\int_{-\infty}^0 e^{-\lambda t} T(t) \; dt , & {\rm Re}\, \lambda
<0,
\end{array} \right.
\end{equation}
thus
\begin{equation}\label{estimate}
\|R(\lambda, A)\| \le \frac{{M}}{|{\rm Re}\, \lambda|^{a+1}},
\quad {\rm Re} \, \lambda \neq 0.
\end{equation}

The estimate \eqref{estimate} and the resolvent identity allow us to extend the property of horizontal convergence of
$D(\alpha+i\beta)=R(\alpha+i\beta,A)-R(-\alpha+i\beta,A)$ to the regions
$$
\Omega^h(\beta):=\bigl \{\alpha+i\beta': \alpha \in (0,1),\,|\beta -\beta'|\le h(\alpha)  \bigr \},
$$
corresponding to the approach function $h(t)=t^{a+1}$.

\begin{lemma}\label{resolvent}
Let $X$ be a Banach space, and let $x \in X$ be fixed.
\begin{itemize}
\item [(i)]
$\lim_{\alpha \to 0+}\|D(\alpha +i\beta)x\|=0$ if and only if
\newline\noindent $\lim_{z\to i\beta,\, z \in\Omega^h(\beta)} \|D(\alpha +i\beta)x\|=0$.

\item [(ii)] If $\sigma_p(A)=\emptyset$, then the limit
$\lim_{\alpha \to 0+}D(\alpha+i\beta)x$ exists if and only if
$\lim_{\alpha \to 0+}\|D(\alpha +i\beta)x\|=0$.

\item [(iii)] If $X$ is reflexive and $\sigma_p(A)=\emptyset$, then
$\sup_{\alpha > 0} \|D(\alpha + i\beta)x\|<\infty$ if
and only if $D(\alpha +i\beta)x$ tends to $0$ weakly as $\alpha \to 0+$.

\end{itemize}
\end{lemma}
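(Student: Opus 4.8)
The plan is to reduce all three equivalences to the second resolvent identity, exploiting that the decay exponent $a+1$ in \eqref{estimate} is exactly the exponent in the approach function $h(t)=t^{a+1}$. In each item one direction is immediate: in (i) the radial segment $\{\alpha+i\beta:0<\alpha<1\}$ is contained in $\Omega^h(\beta)$, so convergence along $\Omega^h(\beta)$ trivially forces radial convergence; and in (ii) the existence of a (norm) limit of $D(\alpha+i\beta)x$, and in (iii) weak convergence of $D(\alpha+i\beta)x$ to $0$, both trivially give $\sup_{\alpha>0}\|D(\alpha+i\beta)x\|<\infty$. So only the converses need proof.

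For (i), fix $\beta'$ with $|\beta-\beta'|\le h(\alpha)=\alpha^{a+1}$ and write $D(z)=R(z,A)-R(-\bar z,A)$. Applying the resolvent identity to $R(\pm\alpha+i\beta',A)$ and $R(\pm\alpha+i\beta,A)$, and using that all resolvents of $A$ commute, I would derive
\begin{equation*}
\bigl(I-i(\beta-\beta')R(\alpha+i\beta,A)\bigr)D(\alpha+i\beta')x=\bigl(I+i(\beta-\beta')R(-\alpha+i\beta',A)\bigr)D(\alpha+i\beta)x .
\end{equation*}
By \eqref{estimate}, $|\beta-\beta'|\,\|R(\alpha+i\beta,A)\|\le\alpha^{a+1}\cdot M\alpha^{-(a+1)}=M$, which is $O(1)$; after replacing the constant in $h$ so that this quantity is $<\tfrac12$ (harmless, since $c_0t^{a+1}\ge c_0t^3$ for $0<t<1$, $a\le2$, so the shrunk region still contains the ones to which Theorem~\ref{halfplane} applies), the operator on the left is boundedly invertible, uniformly in $\alpha$, and $\|D(\alpha+i\beta')x\|\le C\,\|D(\alpha+i\beta)x\|$ uniformly over the cone; hence radial convergence to $0$ propagates throughout the approach region.

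For (ii) and (iii) I would use a common algebraic identity. From $(\lambda-A)R(\lambda,A)=I$ one gets $(A-i\beta)D(\alpha+i\beta)x=\alpha\bigl[R(\alpha+i\beta,A)+R(-\alpha+i\beta,A)\bigr]x$; then, inserting $R(\mu,A)$ for a fixed $\mu$ with $\Re\mu\neq0$ and setting $d:=i\beta-\mu\neq0$, the resolvent identity lets one rewrite the right-hand side so that the possibly divergent pieces $\alpha R(\pm\alpha+i\beta,A)x$ (of norm $O(\alpha^{-a})$) are repackaged into $R(\mu,A)$ applied to $D(\alpha+i\beta)x$:
\begin{equation*}
(A-i\beta)D(\alpha+i\beta)x=(d^2-\alpha^2)R(\mu,A)D(\alpha+i\beta)x+d\,D(\alpha+i\beta)x+2\alpha R(\mu,A)x .
\end{equation*}
Suppose now $D(\alpha+i\beta)x\to L$ in norm (case (ii)), or --- $X$ being reflexive --- that $\sup_{\alpha>0}\|D(\alpha+i\beta)x\|<\infty$ and $D(\alpha_n+i\beta)x\rightharpoonup L$ along some $\alpha_n\to0+$ (case (iii), such a sequence existing by weak compactness of bounded sets in a reflexive space). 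In either topology the right-hand side tends to $d^2R(\mu,A)L+dL$, since $R(\mu,A)$ is bounded and the terms with $\alpha^2$ and $2\alpha R(\mu,A)x$ vanish; as $A$ is closed, its graph is weakly closed, so $L\in D(A)$ and $(A-i\beta)L=d^2R(\mu,A)L+dL$, i.e.\ $(A+\mu-2i\beta)L=d^2R(\mu,A)L$. This already gives $L\in D(A^2)$, and applying $\mu-A$, using $(\mu-A)R(\mu,A)L=L$ and $(\mu-A)(A+\mu-2i\beta)=d^2-(A-i\beta)^2$, yields $(A-i\beta)^2L=0$. Since $\sigma_p(A)=\emptyset$ gives $\ker(A-i\beta)=\{0\}$, hence $\ker(A-i\beta)^2=\{0\}$, we get $L=0$. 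This settles (ii); in (iii), every weak subsequential limit of the bounded net $\{D(\alpha+i\beta)x\}_{\alpha\to0+}$ in the reflexive space $X$ is $0$, so $D(\alpha+i\beta)x\to0$ weakly.

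I expect the only real difficulty to lie in (i): forcing the self-referential term $i(\beta-\beta')R(\alpha+i\beta,A)D(\alpha+i\beta')x$ to be absorbable. Its size is bounded only by $M$, not by something $<1$, so one is obliged either to shrink the constant in the approach function or to sharpen \eqref{estimate} at the point $i\beta$; the reason the scheme works at all is the exact match between the exponent of $h$ and that of the resolvent bound. Parts (ii)--(iii) present no serious obstacle once the displayed identity is set up, the point being that it leaves only terms that are $R(\mu,A)$ applied to the convergent quantity $D(\alpha+i\beta)x$, or manifestly negligible.
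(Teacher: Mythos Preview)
Your argument is essentially correct, but in each part you take a slightly more laborious route than the paper. For (i) the paper avoids the absorption problem you flag: instead of your first-order identity it expands to second order via $D(\alpha+i\beta)=2\alpha[\alpha^2-(A-i\beta)^2]^{-1}$ and obtains
\[
D(\alpha+i\beta)-D(\alpha+i\beta_1)=i(\beta_1-\beta)\bigl[R(\alpha+i\beta_1,A)+R(-\alpha+i\beta_1,A)\bigr]D(\alpha+i\beta)-\tfrac{(\beta_1-\beta)^2}{2\alpha}D(\alpha+i\beta)D(\alpha+i\beta_1),
\]
where \emph{both} right-hand terms carry the factor $D(\alpha+i\beta)x\to0$ and the accompanying operators are uniformly bounded on $\Omega^h(\beta)$ (the second one has norm $\lesssim\alpha^{a}$). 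Thus no Neumann-series inversion is needed and the lemma holds for the full region $h(t)=t^{a+1}$, not only for a shrunk $h(t)=c_0t^{a+1}$ as in your version. Your workaround is adequate for the downstream application, but it does not quite prove Lemma~\ref{resolvent} as stated. For (ii) and (iii) the paper is again more direct: from $[\alpha^2-(A-i\beta)^2]D(\alpha+i\beta)=2\alpha I$ one has immediately $(A-i\beta)^2D(\alpha+i\beta)x=\alpha^2D(\alpha+i\beta)x-2\alpha x\to0$, so closedness (weak closedness) of $(A-i\beta)^2$ gives $(A-i\beta)^2L=0$ in one step, with no auxiliary $R(\mu,A)$. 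Your detour through $\mu$ reaches the same conclusion but the extra regularisation is unnecessary precisely because the \emph{square} $(A-i\beta)^2D(\alpha+i\beta)x$ already converges in norm.
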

\begin{proof}
(i) Fix $x$ and $\beta$ such that $\lim_{\alpha \to 0+}\|D(\alpha +i\beta)x\|=0$.
We use that
\begin{gather}
D(\alpha+i\beta)=2\alpha[\alpha^2-(A-i\beta)^2]^{-1},\label{de1}\\
(A-i\beta)D(\alpha+i\beta)=\alpha[R(\alpha+i\beta,A)+R(-\alpha+i\beta,A)].\label{de2}
\end{gather}
Therefore,
\begin{gather*}
D(\alpha+i\beta)-D(\alpha+i\beta_1)\\=(2\alpha)^{-1}
\bigl( [\alpha^2-(A-i\beta_1)^2]-[\alpha^2-(A-i\beta)^2]\bigr)D(\alpha+i\beta)D(\alpha+i\beta_1)\\
=(2\alpha)^{-1}[2i(A-i\beta_1)(\beta_1-\beta)-(\beta_1-\beta)^2]D(\alpha+i\beta)D(\alpha+i\beta_1)\\
=i(\beta_1-\beta)[R(\alpha+i\beta_1,A)+R(-\alpha+i\beta_1,A)]D(\alpha+i\beta)\\-
\frac{(\beta_1-\beta)^2}{2\alpha}D(\alpha+i\beta)D(\alpha+i\beta_1).
\end{gather*}
Since $(\beta_1-\beta)^2\alpha^{-1}\to 0$ and
$|\beta_1-\beta|\cdot
\|R(\alpha+i\beta_1,A)+R(-\alpha+i\beta_1,A)\|$ is bounded when
$\alpha\to 0+$, $\alpha + i \beta_1 \in \Omega^h (\beta)$, we
conclude that
$$
\|D(\alpha +i\beta_1)x\|\to 0, \qquad \alpha\to 0+, \,\alpha + i
\beta_1 \in \Omega^h (\beta).
$$

(ii) Let $\lim_{\alpha \to 0+}D(\alpha +i\beta)x=y$. By \eqref{de2},
$$
(A-i\beta) D(\alpha+i\beta)x=\alpha (R(\alpha+i\beta,A)x+ R(-\alpha+i\beta,A)x.
$$
Therefore, $D(\alpha+i\beta)x \in {\rm dom}\, (A-i\beta)$. By \eqref{de1},
$$
(A-i\beta)^2 D(\alpha+i\beta)x =-2\alpha x  +\alpha^2 D(\alpha+i\beta)x \to 0,\qquad \alpha \to
0+.
$$
Since $(A-i\beta)^2$ is closed, $y \in {\rm dom} \, (A-i\beta)^2$,
$(A-i\beta)^2y=0$ and then, by assumption,  $y=0$.

 (iii) Since $X$ is reflexive, the set
$S:=\{D(\alpha_n +i\beta)x: n \in \mathbb N\}$ is weakly
precompact for any sequence $\alpha_n \searrow 0$, $n \to \infty$.

 Let $y \in X$ be such that
$$
w-\lim_{n \to \infty} D(\alpha_{n_k} +i\beta)x=y
$$
for a subsequence $\alpha_{n_k} \searrow 0$, $k \to \infty$. As above,
$$
(A-i\beta)^2 D(\alpha_{n_k}+i\beta)x=-2\alpha_{n_k} x
+\alpha_{n_k}^2 D(\alpha_{n_k}+i\beta)x \to 0.
$$
Since $(A-i\beta)^2$ is closed, it is also weakly closed, hence  $(A-i\beta)^2y=0$, and then $y=0$.
Thus the only weak (sequential) limit point of $S$ is $0$ so that
the weak limit of $D(\alpha_{n} +i\beta)x$, $n \to \infty$, is $0$.
Since the choice of $\{\alpha_n\}$ was arbitrary, the assertion of the lemma
follows.
\end{proof}

Theorem \ref{groups} now follows from Theorem \ref{halfplane}.

\begin{proof}[Proof of Theorem \ref{groups}]
We start with the proof of \eqref{d1}.
Let
$$
X^D(F):= \{x \in X: \lim_{\alpha \to 0+ }D(\alpha +i\beta)x=0, \,
\, \beta \in \mathbb R \setminus F \}.
$$
Obviously, $x \in X(F)$ implies $x \in X^D(F)$ since the function
$R(\lambda, A)x$ extends analytically through $i(\mathbb
R\setminus F)$ by the definition of $\sigma (x)$. To prove the
converse inclusion fix $x \in X^D(F)$ and note that
$$
\mathbb R\setminus F=\cup_{n \ge 1} (a_n,b_n), \qquad -\infty \le a_n
\le b_n \le +\infty.
$$
Let further $x^* \in X^*$ and $n \in
\mathbb N$ be fixed, and assume without loss of generality that
$[ia_n,ib_n]$ is compact. The harmonic function $\langle
D(\lambda)x, x^*\rangle$ defined on the rectangle $R_n=\{\lambda
\in \mathbb C: -1 \le {\rm Re}\, \lambda \le 1, \, b_n <  {\rm
Im}\, \lambda < a_n \}$ satisfies the estimate \eqref{estimate}.
By Theorem \ref{halfplane} (b) and Lemma \ref{resolvent} (i), $\langle
D(\lambda)x, x^*\rangle$ extends continuously to the interval
$(ia_n, ib_n )$ and is zero there. Arguing as in the proof of
\cite[Theorem 5.4]{BoChTo07} we obtain that
$\langle R(\lambda,A)x, x^*\rangle$ extends analytically to $R_n$. Furthermore, by
the uniform boundedness principle,
$$
\sup_{\lambda_1,\lambda_2 \in R_n \setminus i\mathbb R, \lambda_1\neq\lambda_2}\left \|\frac{R(\lambda_1,A)x -
R(\lambda_2,A)x}{\lambda_1-\lambda_2}\right \|<\infty,
$$
so that $R(\lambda,A)x$ is uniformly continuous on $R_n\setminus i\mathbb
R$ and therefore extends continuously to $R_n.$ The analyticity of
$\langle R(\lambda,A)x, x^* \rangle$ in $R_n$  for every $x \in
X^*$  implies that $R(\lambda, A)x$ is analytic in $R_n\setminus
i\mathbb R$ so that $R(\lambda,A)x$ is analytic in $R_n.$ Since
$n$ was arbitrary, the proof is finished.

Let now $X$ be reflexive,  $\sigma_p(A)=\emptyset$, and $a \in[0,2)$. Define
$$
X^D_b(F):= \{x \in X: \sup_{\alpha >0}\|D(\alpha +i\beta)x\|<\infty,
\, \beta \in \mathbb R \setminus F \}.
$$
Then arguing as above  $X(F) \subset X^D_b(F)$. Moreover, the
argument used above shows that to verify the opposite inclusion
it suffices to prove that for $x\in X^D_b$ and $x \in X^*$,
the function $\langle D(\lambda) x,x^*\rangle$ satisfies the conditions
of Theorem \ref{halfplane} for the approach function $h(t)=t^{a+1+\epsilon}$,
$\epsilon \in(0,2-a)$. Since $\sup_{\alpha >0} \|D(\alpha+i\beta)x\|<\infty,$
by Lemma \ref{resolvent} (iii) and the equality
\begin{gather*}
\bigl\langle (D(\alpha+i\beta)-D(\alpha+i\beta_1))x,x^*\bigr\rangle\\
=i(\beta_1-\beta)\bigl\langle D(\alpha+i\beta)x,[R(\alpha+i\beta_1,A)+R(-\alpha+i\beta_1,A)]^*x^*
\bigr\rangle\\
-\frac{(\beta_1-\beta)^2}{2\alpha}\bigl\langle D(\alpha+i\beta)x, D(\alpha+i\beta_1)^*x^*\bigr\rangle,
\end{gather*}
obtained in the proof of Lemma \ref{resolvent} (i), it follows
that $\langle D(\lambda) x, x^*\rangle$ satisfies \eqref{lim1}.
The verification of the condition (b) of Theorem \ref{halfplane} is
straightforward, thus \eqref{d2} is proved as well.
\end{proof}

\begin{remark}
We do not know  whether Theorem \ref{groups} is optimal with
respect to the exponent $a$. While, by Proposition \ref{proposition},
we cannot have a counterpart of Theorem \ref{halfplane} in the situation where $a>2$,
the corresponding examples of $C_0$-groups are still out of reach.
\end{remark}

\begin{proof}[Proof of Theorem \ref{ranges}]
First, we show that for every $n\ge 1$,
\begin{equation}\label{incl}
X(F) \subset \bigcap_{\beta \in \mathbb  R \setminus F} {\rm ran}(i\beta-A)^{n}.
\end{equation}
If $x \in X(F)$, then the function $f(\lambda)=R(\lambda, A)x$,
$\lambda \in \mathbb C\setminus i\mathbb R$, extends analytically to
$\mathbb C\setminus i F$ as well as its derivatives, and we have
$$
f^{(k)}(\lambda)=(-1)^k k! R^{k+1}(\lambda, A)x, \qquad 1 \le k \le n.
$$
Since for any $\beta \in \mathbb R\setminus F$ and
$\lambda \in \mathbb C \setminus i\mathbb R$,
\begin{equation*}\label{equal}
(i\beta-A)^n R^n (\lambda, A)x=x
+\sum_{k=1}^{n}C^k_n(i\beta-\lambda)^{k} R(\lambda, A)^k x,
\end{equation*}
by the closedness of $(i\beta - A)^n$ we obtain
$$
(i\beta - A)^n f(i\beta)=x, \qquad \beta \in \mathbb C \setminus F,
$$
so that $$x \in \bigcap_{\beta \in \mathbb R\setminus F} {\rm ran}
(i\beta - A)^n.$$

To prove the equality in \eqref{intersect1}, we note that $x\in{\rm ran} (i\beta-A)^3$,
$\beta \in \mathbb R\setminus F$, imply for $a\in[1,2)$
that $\lim_{\alpha \to 0+}D(\alpha +i\beta)x=0$
Indeed, by \eqref{de1}, \eqref{de2},
\begin{gather*}
D(\alpha+i\beta)(A-i\beta)^3y=\alpha^2 D(\alpha+i\beta)(A-i\beta)y-2\alpha (A-i\beta)y\\=
\alpha^3[R(\alpha+i\beta,A)+R(-\alpha+i\beta,A)]y-2\alpha (A-i\beta)y\to 0+,\,\, \alpha \to
0+.
\end{gather*}
Then, by Theorem \ref{groups},
$R(\lambda, A)x$ extends analytically to $\mathbb C \setminus F$, and the statement follows.
The proof of \eqref{intersect} is analogous.
\end{proof}

\begin{remark} In fact, to prove \eqref{intersect} we could just use Theorem~\ref{mmm} (a), and deal only with
radial behavior of the resolvents.
\end{remark}

\begin{remark}
Observe that  in \eqref{intersect}, \eqref{intersect1} one cannot replace ${\rm ran}
(i\beta - A)^n$, $n=2,3$, by ${\rm ran} (i\beta - A),$ see \cite[p.
136]{BaChTo02}. We do not know, however, whether one can replace
the exponent $2$ in \eqref{intersect} by a real number smaller
than $2.$
\end{remark}

Theorems \ref{groups} and \ref{ranges} apply in particular to
$C_0$-groups with (at most) linear growth. As a natural example of
such groups we mention $C_0$-groups $(\mathcal T(t))_{t \in
\mathbb R}$ on a Banach space $\mathcal X= X\oplus X$ generated by
the triangular operator matrices of the form
\begin{equation*}
 {\mathcal A}=\left( \begin{array}{cc}
       A_1  & B \\
       O & A_2 \end{array}
       \right)
 \end{equation*}
where $A_1$ and $A_2$ are the generators of bounded $C_0$-groups
$(T_1(t))_{t \in \mathbb R}$ and $(T_2(t))_{t \in \mathbb R}$ on
$X,$ and $B$ is bounded from ${\rm dom}(A_2)$ to ${\rm dom}(A_1).$
The operator ${\mathcal A}$ can be treated as an off-diagonal
perturbation of the generator of a bounded $C_0$-group defined by
the matrix with diagonal entries $A_1$ and $A_2$. The
corresponding  semigroup $({\mathcal T}(t))_{t \ge 0}$ is given by
 \begin{equation*}
{\mathcal T}(t)=\left( \begin{array}{cc}
        T_1(t) & \int_{0}^t T(s)B T(t-s)\, ds \\
       O & T_2(t) \end{array}
      \right), \qquad t \ge 0.
 \end{equation*}
Such matrices appear frequently in applications, e.g. in the study
of  second order abstract and concrete Cauchy problems. For more
details on this subject see e.g. \cite{Na89}.

Note that results similar to Theorems \ref{groups} and
\ref{ranges}  hold also for discrete groups $(T^n)_{n \in \mathbb
Z} \subset {\mathcal L}(X)$ such that $\|T^n\| \le M (|n|+1)^{a}, a \in
[0,2), n \in \mathbb Z$. Their proofs are straightforward
modifications of the proofs of Theorems \ref{groups} and
\ref{ranges} using the same boundary uniqueness statements of Theorem~\ref{halfplane}.
We leave formulation of these results to the interested reader.


\begin{thebibliography}{99}
\bibitem{AlGuPe07} J.  Alvarez, M. Guzm\' an-Partida, and S. P\'erez-Esteva,
\emph{Harmonic extensions of distributions,} Math. Nachr.
\textbf{280} (2007), 1443--1466.

\bibitem{AmMoGe96} W. O. Amrein, A. Boutet de Monvel,
and V. Georgescu, \emph{$C_0$-groups, commutator methods and
spectral theory of $N$-body Hamiltonians,} Progress in
Mathematics, \textbf{135}, Birkh\"auser, Basel, 1996.

\bibitem{Ba79} M. Baillet, \emph{Analyse spectrale des
op\'erateurs hermitiens d'une espace de Banach,}  J. London Math.
Soc. \textbf{19} (1979), 497--508.

 \bibitem{BaChTo02}
C. J. K. Batty, R. Chill, and Yu. Tomilov, \emph{ Strong stability
 of bounded evolution families and semigroups,} J. Funct. Anal. \textbf{193} (2002), 116--139.

\bibitem{BaWo83} H. Baumg\" artel and M.  Wollenberg, \emph{
Mathematical scattering theory,} Operator Theory: Advances and
Applications, \textbf{9}, Birkh\" auser, Basel, 1983.

\bibitem{PLM} A.~Borichev, \textit{Beurling algebras and the generalized Fourier transform}, Proceedings of the London Mathematical Society \textbf{73} (1996), 431--480.

\bibitem{BOR}
A.~Borichev, \textit{On the minimum of harmonic functions},
Journal d'Analyse Mathematique \textbf{89} (2003) 199--212.

\bibitem{BoChTo07} A.~Borichev, R.~Chill and Yu.~Tomilov, \textit{Uniqueness theorems for {\rm(}sub-{\rm)}har\-monic functions
with applications to operator theory}, Proceedings of the London Mathematical
Society \textbf{95} (2007), 687--708.

\bibitem{VOL}
J.~E.~Brennan, A.~L.~Volberg, \textit{Asymptotic values and the growth of analytic functions
 in spiral domains}, Publ.\ Mat.\ \textbf{37} (1993) 465--477.

\bibitem{BrCh90} J. Brossard and L.  Chevalier, \emph{Probleme
de Fatou ponctuel et d\'erivabilit\'e des mesures,} Acta Math.
\textbf{164} (1990),  237--263.

\bibitem{DOM}
Y.~Domar, \textit{On the existence of a largest subharmonic
minorant of a given function}, Ark.\ Mat.\ \textbf{3} (1958)
429--440.

\bibitem{Do63} W. Donoghue, \emph{ A theorem of the
Fatou type,} Monatsh. Math. \textbf{67} (1963), 225--228.

\bibitem{CuNe89} Ph. Curtis and M. Neumann, \emph{Nonanalytic
functional calculi and spectral maximal spaces,} Pacific J. Math.
\textbf{137} (1989),  65--85.

\bibitem{FoVa74}
C. Foias and F.-H. Vasilescu, \emph{Non-analytic local functional
calculus,} Czechoslovak Math. J. \textbf{24} (1974), 270--283.

\bibitem{GM}
J. B. Garnett and D. E. Marshall, \textit{Harmonic measure}. New Mathematical Monographs, 2,
Cambridge University Press, Cambridge, 2005.

\bibitem{Ge57} F. W. Gehring, \emph{The Fatou theorem and its converse,} Trans. Amer.
Math. Soc. \textbf{85} (1957), 106--121.

\bibitem{Jo82}
P. E. T. Jorgensen, \emph{Spectral theory for infinitesimal
generators of one-parameter groups of isometries: the min-max
principle and compact perturbations,} J. Math. Anal. Appl.
\textbf{90} (1982), 343--370.

\bibitem{Jo92}
P. E. T. Jorgensen, \emph{ Spectral theory for one-parameter
groups of isometries,} J. Math. Anal. Appl. \textbf{168} (1992),
131--146.

\bibitem{Kanto}
S. Kantorovitz, \emph{ Spectral theory of Banach space operators.
$C\sp{k}$-classification, abstract Volterra operators, similarity,
spectrality, local spectral analysis,} Lecture Notes in
Mathematics, \textbf{1012} Springer, Berlin, 1983.

\bibitem{Ko2}
D. Kocan, \emph{Spectral manifolds for a class of operators,}.
Illinois J. Math. \textbf{10} (1966), 605--622.

\bibitem{Ko1}
D. Kocan, \emph{ A characterization of some spectral manifolds for
a class of operators,} Illinois J. Math. \textbf{16} (1972),
359--369.

\bibitem{KOO}
P.~Koosis, \textit{The Logarithmic Integral}. I, Cambridge Studies in Advanced Mathematics, vol.~12, Cambridge University Press,
Cambridge, 1988.

\bibitem{Kr2}
B. Kritt, \emph{ A theory of unbounded generalized scalar
operators,} Proc. Amer. Math. Soc. \textbf{32} (1972), 484--490.

\bibitem{Kr1}
B. Kritt, \emph{The Fourier transform of an unbounded spectral
distribution,} Proc. Amer. Math. Soc. \textbf{35} (1972), 74--80.

\bibitem{LaNe00}
K. B. Laursen and M. Neumann, \emph{An introduction
to local spectral theory,} London Mathematical Society Monographs,
Oxford University Press, New York, 2000.

\bibitem{Lo43} L. H. Loomis, \emph{The converse of the Fatou theorem for
positive harmonic functions,} Trans. Amer. Math. Soc. \textbf{53}
(1943), 239--250.

\bibitem{Ma81} E. Marschall, \emph{Funktionalkalk\"ule f\"ur abgeschlossene lineare
Operatoren in Banachr\"aumen,} Manuscripta Math. \textbf{35}
(1981), 277--310.

\bibitem{Ma86} E. Marschall, \emph{ On the functional-calculus of
nonquasianalytic groups of operators and cosine functions,} Rend.
Circ. Mat. Palermo \textbf{35} (1986),  58--81.

\bibitem{MiMiNe} T. L.  Miller, V. G. Miller, and M. Neumann,
\emph{ Spectral subspaces of subscalar and related operators,}
Proc. Amer. Math. Soc. \textbf{132} (2004), 1483--1493.

\bibitem{Mu07}
V.  M\"uller, \emph{Spectral theory of linear operators
and spectral systems in Banach algebras,} Second edition, Operator
Theory: Advances and Applications, 139, Birkh\"auser, Basel, 2007.

\bibitem{NaRuSh82} A. Nagel, W.  Rudin, and J. Shapiro, \emph{Tangential
boundary behavior of functions in Dirichlet-type spaces,}  Ann. of
Math. (2) \textbf{116} (1982),  331--360.

\bibitem{Na89} R. Nagel, \emph{ Towards a ``matrix theory'' for unbounded
operator matrices,} Math. Z. \textbf{201} (1989),  57--68.

\bibitem{NIK}
N.~K.~Nikolski{\u\i}, \textit{Selected problems of weighted
approximation and spectral analysis}, Trudy MIAN \textbf{120}
(1974); English translation in Proc.\ of the Steklov Institute of
Math.\ \textbf{120} (1974), Amer.\ Math.\ Soc., Providence, RI
(1976), 276 pp.

\bibitem{Ru78}  W. Rudin, \emph{ Tauberian theorems for positive harmonic
functions,}  Indag. Math. \textbf{40} (1978),  376--384.

\bibitem{SHA}
V.~L.~Shapiro, \textit{The uniqueness of functions harmonic in the interior of the unit disk},
Proceedings of the London Mathematical Society \textbf{13} (1963) 639--652.


\bibitem{Va08} A. Vagharshakyan, \emph{ On the maximum principle for harmonic
functions,}  Algebra i Analiz \textbf{20} (2008), no. 3, 1--17;
translation in St. Petersburg Math. J. \textbf{20} (2009),
325--337.

\bibitem{Vr73} P. Vrbov\'a, \emph{Structure of maximal spectral spaces of
generalized scalar operators,} Czechoslovak Math. J. \textbf{23}
(1973), 493--496.

\bibitem{W}
S.~E.~Warschawski, \textit{On conformal mapping of infinite strips},
Trans.\ Amer.\ Math.\ Soc.\ \textbf{51} (1942) 280--335.

\bibitem{WOL}
F.~Wolf, \textit{The Poisson integral. A study in the uniqueness of functions}, Acta.\ Math.\ \textbf{74} (1941) 65--100.


\end{thebibliography}
\end{document}